\let\oldTitle\title
\renewcommand{\title}[1]{\newcommand{\myTitle}{#1}\oldTitle{#1}} 
\title{Logarithmic Cobordism and Donaldson-Thomas Invariants}
\numberwithin{equation}{section}
\theoremstyle{plain}
\newtheorem{theo}{Theorem}[section]
\newtheorem{prop}[theo]{Proposition}
\theoremstyle{definition}
\newtheorem{defn}[theo]{Definition}
\newtheorem{conj}[theo]{Conjecture}
\theoremstyle{remark}
\newtheorem{rem}[theo]{Remark}
\newtheorem{eg}[theo]{Example}
\begin{document}

\author{Jose Luis Guzman}

\address{Department of Mathematics, MIT,
Cambridge, MA 02138, USA} \email{joselg@mit.edu}
\vspace{-1em}

\begin{center}
  {\small \textit{Preliminary version}}
\end{center}
\vspace{1em}

\begin{abstract}   

Let $X$ be a smooth projective 3-fold and let $D$ be a simple normal crossings (snc) divisor in $X$. In recent work by Maulik-Ranganathan, logarithmic Donaldson-Thomas invariants for the pair $(X,D)$ are defined. One particular output of this construction is a logarithmic Hilbert scheme of points $\text{Hilb}^n(X,D)$, which is a compatification of the non-compact $\text{Hilb}^n(X\setminus D)$ and points in the boundary of this compactification correspond to subschemes in \textit{expansions} $\mathcal{X} \rightarrow X $ of X. The logarithmic Hilbert scheme of points carries a zero dimensional virtual fundamental class, so we can form the generating function of the degrees of the virtual classes: $Z_{\text{DT}}(X,D;q)_0 = 1 + \sum_{n\geq1} \deg [\text{Hilb}^n(X,D)]^{\text{vir}} q^n$. We prove paper that this generating function is given by the simple formula $Z_{\text{DT}}(X,D;q)_0 = M(-q)^{\int_X c_3(T^{\text{log}}_X\otimes K_X^{\text{log}}) }$, where $M(q) = \prod_{n\geq1} \frac{1}{(1-q^n)^n}$, which was conjectured in \cite{MR}. Following the cobordism strategy of Levine-Pandharipande \cite{LP} for the conjecture when $D$ is empty, we define a logarithmic cobordism ring and prove that in dimension 3 the ring is generated by pairs $(X,D)$ on which the conjecture is known to be true, which together with the degeneration formalism completes the proof of Maulik-Ranganathan's conjecture. Many invariants in logarithmic geometry are invariant under \textit{logarithmic modifications} $(X',D')\rightarrow (X,D)$, which are blow ups of $(X,D)$ along strata. Thus, from the perspective of logarithmic geometry a cobordism ring incorporating the relation $(X',D') = (X,D)$ is more natural. We prove in Theorem 2.6 that imposing this relation collapses the theory to the empty boundary cobordism theory of  Levine-Pandharipande \cite{LP}.
\end{abstract}
\maketitle

\section{Introduction}
\subsection{Overview}
Let $X$ be a smooth projective 3-fold. The Donaldson-Thomas invariants of $X$ are obtained by performing intersection theory on the DT moduli space $\text{DT}_{\beta,\chi}(X)$, which is the Hilbert scheme of curves in $X$ with curve class $\beta\in H_2(X,\mathbb{Z})$ and holomorphic euler characteristic $\chi$. The construction of the virtual fundamental class on the DT moduli spaces also furnishes a zero dimensional virtual fundamental class for $\text{Hilb}^n(X)$. We can then form the generating series
\begin{align*}
    \text{Z}(X)= 1 + \sum_{n\geq 1} \deg [\text{Hilb}^n(X)]^{\text{vir}} q^n
\end{align*}
In \cite{MNOP01} the following was conjectured, and the conjecture was proven by Levine-Pandharipande in \cite{LP} and by Li \cite{Li06}:
\begin{theo}[\textup Levine and Pandharipande \cite{LP}\textup, \textup Li \cite{Li06}\textup]
Let $X$ be a smooth projective threefold, then we have an equality of generating functions 
\begin{equation*}
    \text{Z}(X) = M(q)^{\int_X c_3(T_X\otimes K_X)}
\end{equation*}
where $M(q)$ is the MacMahon function
\begin{equation*}
    M(q) = \prod_n \frac{1}{(1-q^n)^n}
\end{equation*}
\end{theo}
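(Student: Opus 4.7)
The strategy is to follow Levine-Pandharipande and reduce the identity to the algebraic cobordism ring $\Omega_*$ of Levine-Morel. The key input is that $\Omega_3\otimes\mathbb{Q}$ has rank three, so that any cobordism-invariant assignment $X \mapsto$ (power series in $q$) is determined by its values on three generators; the natural choices are the toric 3-folds $\mathbb{P}^3$, $\mathbb{P}^2\times\mathbb{P}^1$, $(\mathbb{P}^1)^3$, on which the three basic Chern numbers $c_1^3$, $c_1 c_2$, $c_3$ are linearly independent.

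The first step is to show that the power series $\log Z(X,q) \in q\,\mathbb{Q}[[q]]$ factors through $\Omega_3$. For any projective double point degeneration $\mathcal{X}\to\mathbb{P}^1$ with smooth generic fiber $X$ and central fiber $Y_1\cup_D Y_2$ meeting transversally along a smooth divisor $D$, the degeneration formula for Donaldson-Thomas invariants of Li-Wu and Maulik-Pandharipande-Thomas expresses $Z(X,q)$ as a pairing of relative DT series of $(Y_1,D)$ and $(Y_2,D)$ via the cohomology of the Hilbert scheme of points on $D$. Rewriting the relative invariants in terms of absolute ones on the bubble $\mathbb{P}_D(N_{D/Y_1}\oplus\mathcal{O}_D)$ and taking logarithms converts the multiplicative gluing into the additive double point relation
\begin{equation*}
\log Z(X,q) = \log Z(Y_1,q) + \log Z(Y_2,q) - \log Z(\mathbb{P}_D(N_{D/Y_1}\oplus \mathcal{O}_D),q).
\end{equation*}
Since the right-hand side of the theorem is $\log M(q)$ times the Chern number $\int_X c_3(T_X\otimes K_X)$, which is manifestly a cobordism invariant, both sides descend to $\Omega_3\otimes\mathbb{Q}$ and it suffices to check the equality on the three generators.

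The main obstacle is this last verification on the toric generators. Each such $X$ carries a large torus action, and the torus-fixed locus of $\mathrm{Hilb}^n(X)$ consists of monomial ideals supported at the torus-fixed points of $X$, indexed by triples of 3D partitions with trivial asymptotics along the toric edges and faces. Equivariant localization then expresses the zero-dimensional DT partition function as a product of local vertex contributions, and the vanishing of the edge and face asymptotics in the degree-zero sector, together with MacMahon's identity
\begin{equation*}
\sum_{\pi} q^{|\pi|} = M(q)
\end{equation*}
over 3D partitions $\pi$, identifies each vertex contribution with $M(q)^{e_p}$ for a local equivariant Euler number $e_p$. Summing the $e_p$ over torus-fixed points and matching the result with $\int_X c_3(T_X\otimes K_X)$ on the three generators completes the proof; the delicate combinatorial and sign bookkeeping in this localization step, which in particular produces the $K_X$-twist in the exponent, is carried out in MNOP and is where the hard analytic work lies.
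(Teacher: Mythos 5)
Your proposal is correct and is essentially the Levine--Pandharipande argument that the paper itself cites for this theorem rather than reproving (and whose strategy it adapts wholesale for the logarithmic case): establish the double point relation for $\log Z$ via the Li--Wu degeneration formula combined with normal-cone degenerations to eliminate the relative series, descend to $\Omega_3\otimes\mathbb{Q}$, and evaluate on the toric generators $\mathbb{P}^3$, $\mathbb{P}^2\times\mathbb{P}^1$, $(\mathbb{P}^1)^3$ using the MNOP localization computation. The one caveat is the sign: the correct evaluation is $M(-q)^{\int_X c_3(T_X\otimes K_X)}$ (as in the paper's abstract), so your vertex identification must carry the $(-1)^n$ coming from the virtual class --- a discrepancy that the statement as printed in Theorem 1.1, with $M(q)$, also elides.
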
   
A key technique in the study of Donaldson-Thomas theory is the use degenerations, which was introduced by Li-Wu in \cite{Wu}. The set up is as follows: we're given a smooth projective 4-fold $\mathcal{X}$ together with a flat morphism $\pi: \mathcal{X}\rightarrow \mathbb{P}^1$ where a general fiber is a smooth threefold $X$ and the central fiber $\pi^{-1}(0) = A\bigcup_D B$ is the union of two smooth divisors $A,B$ in $\mathcal{X}$ that meet transversally along a smooth divisor $A\cap B = D$. The degeneration $\pi$ also induces a degeneration of DT moduli spaces, and in particular we get a degeneration 
\begin{align*}
    \text{Hilb}^n(X) \rightsquigarrow \bigcup_{a+b = n}\text{Hilb}^a(A,D)\times \text{Hilb}^b(B,D)
\end{align*}
where $\text{Hilb}^a(A,D)$ is the Hilbert scheme of $n$ points of $A$ \textit{relative} to the smooth divisor $D$.  The relative Hilbert schemes also carry a zero dimensional virtual fundamental class, and for a pair of 3-fold and smooth divisor $(Y,E)$ we can form the relative generating function
\begin{align*}
    \text{Z}(Y,E) = 1 +\sum_{n\geq1} \deg [\text{Hilb}^n(Y,E)]^{\text{vir}} q^n
\end{align*}
The degeneration formula of \cite{Wu} applied to our degeneration $\pi:\mathcal{X}\rightarrow \mathbb{P}^1$ tells us that 
\begin{equation*}
    \text{Z}(X) = \text{Z}(A,D)\cdot \text{Z}(B,D)
\end{equation*}
With the introduction of the the technique of degenerations in DT theory, the following conjecture for the relative zero dimensional DT generating function was made in \cite{MNOP02} and was proven by Levine-Pandharipande:
\begin{theo}[\textup Levine and Pandharipande \cite{LP}\textup]
Let $D\subset X$ be a smooth divisor in a smooth projective threefold, then we have an equality of generating functions 
\begin{equation*}
    \text{Z}(X,D) = M(q)^{\int_X c_3(T^\text{log}_X\otimes K^\text{log}_X)}
\end{equation*}
where $M(q)$ is the MacMahon function, $T^\text{log}_X$ and $K^\text{log}_X$ are the logarithmic tangent bundle and logarithmic canonical bundle with respect to the divisorial log structure given by $D\subset X$
\end{theo}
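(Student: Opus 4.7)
The plan is to follow Levine and Pandharipande's cobordism strategy for the absolute case, adapted to handle a smooth divisor. First I would establish the relative degeneration formula of Li--Wu in the form needed for pairs: given a 4-fold family $\mathcal{Y} \to \mathbb{P}^1$ whose general fiber is a smooth 3-fold $Y$ and whose central fiber is $A \cup_D B$ (with $A,B$ smooth and meeting transversally along $D$), equipped with a flat divisor $\mathcal{E} \subset \mathcal{Y}$ restricting to $E \subset Y$ generically and decomposing transversally as $E_A \cup E_B$ on the central fiber, one obtains a multiplicative relation
$$Z(Y, E) = Z(A,\, D \cup E_A) \cdot Z(B,\, D \cup E_B).$$
This is the relative analogue of the factorization $Z(X) = Z(A,D)\,Z(B,D)$ already invoked in the excerpt.

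Next, I would define a relative double-point cobordism group $\omega_3^{\mathrm{rel}}$, generated by isomorphism classes of smooth pairs $(X,D)$ in dimension $3$ modulo the relations coming from the above degenerations. I would then verify that both sides of the conjectured identity descend to $\omega_3^{\mathrm{rel}}$: the left-hand side by the multiplicativity above, and the right-hand side by analyzing the short exact sequences relating the logarithmic tangent bundles of the general and special fibers, followed by standard Chern-class manipulations to show that $\int_X c_3(T_X^{\log} \otimes K_X^{\log})$ is additive under double-point degenerations of pairs. This step is essentially a direct extension of the Chern-class bookkeeping performed in the absolute case.

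The heart of the proof is finding explicit generators of $\omega_3^{\mathrm{rel}}$ on which the formula can be verified. I would aim for a generating set consisting of pairs with $D = \emptyset$ (which reduces immediately to the absolute theorem of Levine--Pandharipande and Li) together with a small list of explicit pairs such as projectivized bundles $(\mathbb{P}(L \oplus \mathcal{O}_S), S_\infty)$ over a smooth surface $S$, and perhaps toric pairs where the formula admits a combinatorial verification. For the bundle generators, a rubber torus calculus on the $\mathbb{P}^1$-fiber direction reduces the computation of $Z(X,D)$ to an equivariant integral on the base $S$, which can then be matched with the logarithmic Chern-class integral explicitly.

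The main obstacle will be this generation claim. In the absolute case, Levine--Pandharipande exploit an explicit classification of smooth 3-folds modulo double-point cobordism, which is made tractable by the structure of algebraic cobordism in low dimension. In the relative setting, one must classify \emph{pairs} up to double-point equivalence, and the refinement of the relations to keep track of the divisor makes the computation considerably more delicate. The natural approach is a systematic use of degeneration to the normal cone along $D$ and along chosen subvarieties of $D$, combined with iterated blowups in centers contained in (or transverse to) $D$, in order to reduce an arbitrary pair to a combination of the chosen generators. Producing enough relations to carry out this reduction, while keeping the divisorial data smooth throughout, is the step that will require the most care.
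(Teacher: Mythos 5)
The paper does not actually prove this statement: Theorem 1.2 is imported wholesale from Levine--Pandharipande \cite{LP} and used as an \emph{input}, namely to evaluate $Z$ on the smooth-pair generators of $\Omega^{\text{Log}}_3$. So the fair comparison is with the strategy of \cite{LP} and with the machinery of Sections 2--4 here, which generalizes that strategy to snc pairs. Your blueprint --- a degeneration formula for pairs, a double point cobordism group of pairs, invariance of both sides (your additivity claim for $\int_X c_3(T^{\log}_X\otimes K^{\log}_X)$ is exactly Theorem 4.1 of the paper), a generating set containing $(X,\emptyset)$ and bundle pairs $(\mathbb{P}(L\oplus\mathcal{O}_S),S_\infty)$, and evaluation on explicit generators --- is precisely that architecture, and you correctly identify the generation claim as the hard step.

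There is, however, a genuine gap in your setup. The degeneration formula you posit, $Z(Y,E)=Z(A,D\cup E_A)\cdot Z(B,D\cup E_B)$, is not ``the relative formula of Li--Wu in the form needed for pairs'': whenever $\mathcal{E}$ meets the double point locus $D$, the boundary $D\cup E_A$ on a component of the central fiber is a normal crossings divisor with two components meeting along a curve, and classical relative DT theory is defined only for \emph{smooth} relative divisors. The invariants $Z(A,D\cup E_A)$ and the factorization you want are exactly the content of Maulik--Ranganathan's logarithmic DT theory and its degeneration formula \cite{MR}, \cite{MR23} --- a substantial theory, not a routine extension of \cite{Wu}. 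Correspondingly, your group $\omega_3^{\mathrm{rel}}$ ``generated by smooth pairs modulo these relations'' is ill-posed: the right-hand sides of the relations leave the category of smooth pairs, which is precisely why this paper builds $\Omega^{\text{Log}}$ out of snc pairs from the start. To stay within the classical framework (as \cite{LP} must, and as a blind proof of this theorem should), the correct mechanism is to trade the divisor for a line bundle rather than degenerate the pair directly: deformation to the normal cone gives $(X,\emptyset)=(X,D)+(\mathbb{P}(N_{D/X}\oplus\mathcal{O}),\mathbb{P}(N_{D/X}))$, hence multiplicatively $Z(X)=Z(X,D)\cdot Z(\mathbb{P}(N_{D/X}\oplus\mathcal{O}),\mathbb{P}(N_{D/X}))$, reducing the relative statement to the absolute theorem plus the bundle pairs; one then degenerates the data $(S,L)$ in a cobordism theory of varieties with line bundles (Lee--Pandharipande \cite{LeeP}, and Propositions 3.3, 3.5--3.7 of this paper), where the relations never produce an illegal boundary. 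Your proposal is missing this trade, and without it both the definition of your cobordism group and your reduction to generators fail; the deferred ``rubber calculus'' evaluation of the bundle generators is a secondary, plausible-but-unexecuted step by comparison.
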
   
The degeneration formalism opens the following path towards a proof of Theorem 1.1. The idea is that if one could prove the conjecture for simple 3-folds, for example toric 3-folds, and then show that any 3-fold can be degenerated to toric pieces, then the degeneration formalism would allow one to conclude Theorem 1.1. This is the strategy followed by Levine-Pandharipande in \cite{LP}.

Having introduced the general background on zero dimensional DT invariants, we now explain the logarithmic DT theory portion of this paper. In \cite{MR} Maulik-Ranganathan construct logarithmic DT spaces for a pair $(X,D)$ of a smooth threefold and an snc divisor $D\subset X$. One particular output of the theory is a logarithmc hilbert scheme of points $\text{Hilb}^n(X,D)$ together with a zero dimensional virtual fundamental class, and we can again form the generating function 
\begin{align*}
    \text{Z}(X,D) = \sum_n \deg [\text{Hilb}^n(X,D)]^{\text{vir}} q^n
\end{align*}
For a simple normal crossings pair $(X,D)$, the following conjecture was made in \cite{MR} (note that it agrees with Theorem 1.2 in the case when $D$ is smooth):
\begin{conj}[\textup Maulik and Ranganathan \cite{MR}\textup]
Let $D$ be a simple normal crossings divisor in a smooth projective threefold $X$, then we have an equality of generating functions
\begin{equation*}
    Z(X,D) = M(q)^{\int_X c_3(T^\text{log}_X\otimes K^\text{log}_X)}
\end{equation*}
\end{conj}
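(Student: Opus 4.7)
My strategy follows the cobordism approach of Levine--Pandharipande \cite{LP}, adapted to the logarithmic setting as outlined in the abstract. I would define a logarithmic cobordism ring $\Omega^{\log}_*$ whose elements are isomorphism classes of snc pairs $(X,D)$, with relations imposed by (i) $\mathbb{P}^1$-families of snc pairs (providing deformation equivalence) and (ii) strict simple degenerations in the sense of Maulik--Ranganathan's expanded geometries, where the central fiber is a union of snc pairs glued along logarithmically compatible boundaries. The goal is to exhibit both sides of the conjectured identity as ring homomorphisms from $\Omega^{\log}_3$ (or its completion) to the multiplicative group $1 + q\, \mathbb{Q}[\![q]\!]$, and then to verify agreement on a set of generators.

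First I would check that each side descends to $\Omega^{\log}_*$. Deformation invariance of $Z(X,D)$ is automatic from the existence of the logarithmic virtual fundamental class in families constructed in \cite{MR}; multiplicativity under simple logarithmic degenerations is the Maulik--Ranganathan degeneration formula. On the right-hand side, the Chern number $\int_X c_3(T^{\log}_X \otimes K^{\log}_X)$ is a logarithmic characteristic number, hence deformation invariant; its additivity under snc degenerations follows from the exact sequence relating $T^{\log}$ of a smoothing to $T^{\log}$ of the components (an snc analogue of the computation for smooth divisors used in \cite{LP}). Thus both sides define homomorphisms out of $\Omega^{\log}_3$.

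Second, I would identify a convenient set of generators on which the conjecture is already known. Natural candidates are toric threefolds equipped with their full toric boundary, where equivariant localization combined with the combinatorics of logarithmic expansions from \cite{MR} should reduce $Z(X,D)$ to box-counting generating functions computed in \cite{MNOP01}. A second source of known cases is Theorem 1.2: any pair $(X,D)$ with $D$ smooth is a generator on which the conjecture holds. The plan is then to show that any snc pair in dimension 3 is, modulo the cobordism relations, expressible in terms of these two classes. Concretely, iterated logarithmic blow-ups along strata of $D$ combined with deformation to the normal cone of irreducible components of $D$ should degenerate an arbitrary snc pair into a configuration whose building blocks are either toric boundary pairs or smooth-divisor pairs.

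The main obstacle I anticipate is precisely this generation statement. In the absolute case, Levine--Pandharipande exploit the classical structure of $\Omega_*$ and the fact that toric varieties generate after tensoring with $\mathbb{Q}$; in the logarithmic setting, one must simultaneously control the ambient threefold and the combinatorial type of the boundary (the dual complex of $D$). Producing enough cobordism relations to kill the additional boundary data, while keeping the relevant logarithmic Chern number tractable, is the heart of the argument. I expect this to require a careful induction on the combinatorial complexity of $D$, using logarithmic resolution and successive degenerations to the normal cones of strata to trade snc pairs for smooth-divisor pairs, where Theorem 1.2 provides the base case.
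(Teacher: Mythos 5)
Your framework matches the paper's: define a logarithmic cobordism ring via double point degenerations of snc pairs, observe that both $Z(X,D)$ (by the Maulik--Ranganathan degeneration formula) and $\int_X c_3(T^{\log}_X\otimes K^{\log}_X)$ (by a log Chern class computation using $N_{E/A}=N_{E/B}^{\vee}$) descend to homomorphisms out of $\Omega^{\mathrm{Log}}_3$, and conclude by checking the identity on generators. But your proposal has a genuine gap exactly where you admit one: the generation statement is only anticipated, never proved, and your sketch of it would not go through as stated. Naive induction ``trading snc pairs for smooth-divisor pairs'' by deformation to the normal cone does not terminate: degenerating to the normal cone of a component $D_k$ of $D=D_1\cup\dots\cup D_k$ yields the relation $(X,D_1\cup\dots\cup D_{k-1})=(X,D)+(\mathbb{P}(L\oplus\mathcal{O}),D_k\cup\eta^{-1}(Z))$, and the new projective-bundle term still carries an snc (not smooth) boundary, since the double locus $D_k=\mathbb{P}(L)$ is reintroduced alongside $\eta^{-1}(Z)$. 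The paper's key idea, absent from your plan, is to trade boundary components for \emph{line bundles}: there is a morphism $\Omega^{\mathrm{Log}}_{1^r}\rightarrow\Omega^{\mathrm{Log}}_{1^{r-1}}$ sending $(X,D,L_1,\dots,L_r)$ to $(\mathbb{P}(L_r\oplus\mathcal{O}),X\cup\eta^{-1}(D),\eta^{\ast}L_1,\dots,\eta^{\ast}L_{r-1})$, so the troublesome term is controlled by the lower-dimensional triple $(D_k,Z,L)$. Iterating reduces threefold snc pairs to surfaces with one line bundle and then to curves with two line bundles, at which point the explicit $\mathbb{Q}$-bases of Lee--Pandharipande for $\Omega_{2,1^1}$ and $\Omega_{1,1^2}$ (e.g.\ $(\mathbb{P}^1,\mathcal{O}(1),\mathcal{O})$, $(\mathbb{P}^1,\mathcal{O},\mathcal{O}(1))$, $(\mathbb{P}^1,\mathcal{O},\mathcal{O})$) finish the induction. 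Without this dimension-reduction mechanism, your ``careful induction on the combinatorial complexity of $D$'' has no engine.

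A second problem is your choice of generators. You propose toric threefolds with full toric boundary and assert that localization plus the combinatorics of expansions ``should reduce'' $Z(X,D)$ to the box counting of \cite{MNOP01}. That is itself an unproven computation of log DT invariants relative to a maximally degenerate snc boundary --- essentially the conjecture for those pairs --- so your base case is circular, or at best imports a substantial new theorem. The paper avoids this entirely: its generating set consists only of pairs with empty boundary ($(\mathbb{P}^3,\emptyset)$, $(\mathbb{P}^2\times\mathbb{P}^1,\emptyset)$, $((\mathbb{P}^1)^3,\emptyset)$) and smooth-divisor pairs ($(\mathbb{P}^3,\mathbb{P}^2)$, $(\mathbb{F}_1\times\mathbb{P}^1,\mathbb{P}^1\times\mathbb{P}^1)$), where the conjecture is already established by Levine--Pandharipande and Li, so no new DT computation is needed. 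To repair your proposal, replace the toric base case by the smooth/empty boundary cases and supply the component-to-line-bundle trade together with the Lee--Pandharipande bases.
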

The zero dimensional log DT generating function also satisfies a degeneration formula proven in \cite{MR23}, which we now explain. Let $\mathcal{X}\xrightarrow{\pi} \mathbb{P}^1 $ be a flat morphism from a smooth projective fourfold. Furthermore, let $\mathcal{D}$ be an snc divisor in $\mathcal{X}$ such that $\pi|_{\mathcal{D}}$ is flat and such that $\mathcal{D}\cap \pi^{-1}(t)$ is an snc divisor in $\pi^{-1}(t)$ for all $t$. Also assume that $\pi^{-1}(0) = A\bigcup_E B$ is a singular fiber which is the union of two smooth threefolds in $\mathcal{X}$ that meet transversally along a divisor $E$, with $\mathcal{D}$ meeting $E$ transversally. Let $X$ be a smooth fiber of $\pi$, which we can equip with the snc divisor $D = X\cap \mathcal{D}$, and we equip $A$ with the snc divisor $(\mathcal{D}\cap A) \cup E = D_A\cup E$ and equip $B$ with the snc divisor $(\mathcal{D}\cap B)\cup E = D_B\cup E$. Then the degeneration formalism of log DT invariants tells us that:
\begin{equation*}
    \text{Z}(X,D) = \text{Z}(A,D_A\cup E)\cdot \text{Z}(B,D_B\cup E)
\end{equation*}
\\
Our approach to Conjecture $1.3$ is inspired by the approach of Levine-Pandharipande of defining a cobordism ring of varieties and quotienting by relations obtained via double point degenerations. In particular, we define a logarithmic cobordism ring $\omega^{\text{log}}$ generated by pairs $(X,D)$ of a smooth varietie with an snc divisor $D\subset X$. The logarithmic cobordism ring is graded $\omega^{\text{Log}} = \oplus_n \omega^{\text{Log}}_n$ by the dimension of $n = \dim X$ and we prove that $\omega^{\text{log}}_3$ has a generating set given by $(\mathbb{P}^3, \emptyset), (\mathbb{P}^2\times\mathbb{P}^1,\emptyset), (\mathbb{P}^1\times\mathbb{P}^1\times\mathbb{P}^1,\emptyset), (\mathbb{P}^3,\mathbb{P}^2), (\mathbb{P}^2\times\mathbb{P}^1,\mathbb{P}^1\times\mathbb{P}^1)$; where $\emptyset$ means that our 3-fold has no boundary.

In particular, Conjecture $1.3$ is true for each generating element, and since the quantity $\int_X c_3(T^\text{log}_X\otimes K^\text{log}_X)$ is preserved under double point degenerations of snc pairs, this completes the proof of Conjecture 1.3.

\subsection{Acknowledgments} I am very grateful to my advisor Davesh Maulik, whose
guidance and support is invaluable to me.

\section{The logarithmic algebraic cobordism ring $\omega^{\text{log}}$}

We now define a cobordism ring, similar to that of Levine-Pandharipande. In our case, for each pair of smooth variety $X$ and an snc divisor $D\subset X$ we will get an element $(X,D)\in \omega^{\text{log}}$. We also allow our varieties to have no boundary, so that we have elements $(X,\emptyset)$. To define this ring, we first define how to get relations between pairs via degenerations.

\subsection{Logarithmic cobordism ring}
\begin{defn}Let $\pi:\mathcal{X}\rightarrow \mathbb{P}^1$ be a flat family of $n$-dimensional varieties such that
\begin{enumerate}
	\item $\pi^{-1}(0)= A\cup_{E}B$  is the union of two smooth varieties who intersect transversally along a smooth divisor $E = A\cap B$
	\item $\mathcal{X}$ is smooth and the morphism $\pi$ is smooth away from $0\in\mathbb{P}^1$ 
\end{enumerate}
Furthermore, suppose we are given a a family of simple normal crossings divisors, i.e. a simple normal crossings divisor $\mathcal{D}\subset \mathcal{X}$ such that
\begin{enumerate}
    \item The morphism $\mathcal{D}\xrightarrow{\pi}\mathbb{P}^1$ is flat 
    \item The intersection $\mathcal{D}\cap \pi^{-1}(t)$ is an snc divisor in $\pi^{-1}(t)$ for all $t\in \mathbb{P}^1$. In particular, we have snc divisors $D = X\cap \mathcal{D}$ and $D_1= A\cap\mathcal{D}$ and $D_2 = B\cap\mathcal{D}$. 
    \item $\mathcal{D}$ meets the double point locus $A\cap B = E$ transversally
\end{enumerate}
The \textit{double point relation} defined by such data is
\begin{equation*}
    [X,D] = [X_1, D_1 + E] +[X_2, D_2 +E]
\end{equation*}
Furthermore, if $\mathcal{X}$ has a line bundle $\mathcal{L}$, we can also extract the relation 
\begin{equation*}
    [X,D,\mathcal{L}|_X] = [X_1, D_1 + E, \mathcal{L}|_{X_1}] +[X_2, D_2 +E,\mathcal{L}|_{X_2}]
\end{equation*}
 
\end{defn}

Let $\mathcal{M}$ be the $\mathbb{Q}$-vector space spanned by pairs $[X,D]$ of smooth projective varieties and simple normal crossings divisors $D\subset X$, and 
let $\mathcal{R}$ be the subgroup spanned by all double point relations. 
We define the \textit{logarithmic cobordism group} to be $$\omega^{\text{Log}}=\mathcal{M}/\mathcal{R}.$$ 
For an snc pair $D\subset X$ the corresponding element of $\omega^{\text{Log}}$ is denoted $(X,D)\in \omega^{\text{Log}}$. Note that the $\mathbb{Q}$-vector space $\omega^{\text{Log}}$ has a ring structure given by $$(X,D)\cdot (Y,E) = (X\times Y, (D\times Y)\cup (X\times E))$$ 

Furthermore, we also have a logarithmic cobordism ring of snc pairs with $r$ line bundles $(X,D,L_1,...,L_r)\in \omega^{\text{Log}}_{1^r}$ where now relations are again obtained by degenerations $\mathcal{X}\rightarrow \mathbb{P}^1$ but now the total space of the degeneration must carry line bundles $\mathcal{L}_1,...,\mathcal{L}_r\in \text{Pic}(\mathcal{X})$. The ring $\omega^{\text{Log}}$ is graded by dimension of the variety $X$, and for an element $(X,D)$ and we let $\omega^{\text{Log}}_n$ be the corresponding piece. Some simple relations in the logarithmic cobordism ring that we will use later on are the following
\begin{eg}
We have $(\mathbb{P}^1,0) = \frac{1}{2}(\mathbb{P}^1,\emptyset)$. This is proven by taking degeneration to the normal cone for the divisor $0\in\mathbb{P}^1$ to conclude $(\mathbb{P}^1,\emptyset) =(\mathbb{P}^1,0) + (\mathbb{P}^1,0) $
\end{eg}
\begin{eg}
Let $D\subset X$ be a smooth divisor, then degeneration to the normal cone of $D$ gives us $(X,\emptyset) = (X,D) + (\mathbb{P}(N_{D\subset X}\oplus \mathcal{O}), \mathbb{P}(N_{D\subset X}))$ where $\mathbb{P}(N_{D\subset X}) = D$. In a later section, we will explain how the term $(\mathbb{P}(N_{D\subset X}\oplus \mathcal{O}),D)$ can be thought of as $(D,L)$, i.e. $D$ together with the line bundle $L = N_{D\subset X}$.
\end{eg}

The degeneration formalism of logarithmic DT invariants proven in \cite{MR23} give us the following:

\begin{prop}
There is a ring homomorphism $F:\omega^{\text{Log}}_3\rightarrow \mathbb{Q}[[x]]$ sending $(X,D)$ to the zero dimensional DT generating function $\text{Z}(X,D)$\end{prop}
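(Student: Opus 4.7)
The plan is to show that the set-theoretic assignment $F(X,D)=Z(X,D)$ descends from $\mathcal{M}_3$ to the quotient $\Omega^{\text{Log}}_3=\mathcal{M}_3/\mathcal{R}_3$, i.e.\ respects every double point relation. A brief comment on the phrasing of the proposition is in order: since the cobordism grading is by dimension and the ring product raises total dimension, the degree-$3$ summand $\Omega^{\text{Log}}_3$ is only an additive group, not a subring. The natural reading of the statement is therefore that $F$ is a group homomorphism from $(\Omega^{\text{Log}}_3,+)$ to the multiplicative group $(1+q\,\mathbb{Q}[[q]],\cdot)$, or equivalently that $\log F$ is $\mathbb{Q}$-linear into $(\mathbb{Q}[[q]],+)$. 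Under this reinterpretation, an additive relation in the source is supposed to become a multiplicative identity in the target.

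With the interpretation fixed, the content of the proposition is an essentially immediate application of the logarithmic degeneration formula of \cite{MR23}. Start with a double point relation coming from a family $\pi:\mathcal{X}\to\mathbb{P}^1$ together with an snc divisor $\mathcal{D}\subset\mathcal{X}$ as in Definition~2.1. The conditions listed there --- smoothness of $\mathcal{X}$, smoothness of $\pi$ away from $0$, central fiber $A\cup_E B$ with $A,B$ smooth meeting transversally along $E$, flatness of $\mathcal{D}\to\mathbb{P}^1$, fiberwise snc conditions on $\mathcal{D}$, and transversality of $\mathcal{D}$ with $E$ --- are precisely the hypotheses under which the logarithmic degeneration formula recalled in the introduction applies. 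Invoking it yields
\begin{equation*}
    Z(X,D)\;=\;Z(A,D_A\cup E)\cdot Z(B,D_B\cup E),
\end{equation*}
which is exactly the multiplicative shadow of the additive double point relation $[X,D]=[A,D_A+E]+[B,D_B+E]$ under $F$. Therefore $F$ annihilates $\mathcal{R}_3$ and descends to a well-defined homomorphism in the sense above.

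The only real step is the bookkeeping verification that Definition~2.1 was set up to reproduce the hypotheses of \cite{MR23} verbatim --- in particular the snc transversality of the family divisor $\mathcal{D}$ with the double locus $E$, which is what ensures that the degeneration takes place in the logarithmic category without requiring auxiliary expansions or blowups of the total space. I do not expect a genuine obstacle here, since Definition~2.1 is clearly tailored to the theorem of \cite{MR23}; the proposition is essentially formal once one notes that the two lists of hypotheses coincide. The substantive mathematical content of the paper arrives only afterwards, when this homomorphism is combined with an explicit analysis of generators of $\Omega^{\text{Log}}_3$ to deduce Conjecture~1.3.
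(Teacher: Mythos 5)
Your proposal is correct and takes essentially the same route as the paper, which in fact offers no written proof at all: the proposition is stated as an immediate consequence of the logarithmic degeneration formula of \cite{MR23}, precisely the verification you carry out by matching the hypotheses of Definition~2.1 against those of the degeneration formula. Your clarifying remark that $\Omega^{\text{Log}}_3$ is only an additive group, so that $F$ must be read as a homomorphism into the multiplicative group $(1+q\,\mathbb{Q}[[q]],\cdot)$ (equivalently, $\log F$ is $\mathbb{Q}$-linear), correctly resolves an imprecision the paper glosses over, and it is exactly how the proposition is used later, where $r(X,D)=\sum_i a_i(Y_i,E_i)$ is converted into $Z(X,D)^r=\prod_i Z(Y_i,E_i)^{a_i}$.
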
      

From the perspective of logarithmic geometry, the more natural ring to consider is the quotient of $\omega^{\text{Log+Mod}}$ in which we impose the relation that $(X',D') = (X,D)$ for any logarithmic modificatio $(X',D')\rightarrow (X,D)$. A logarithmic modification of $(X,D)$ is a any snc pair obtained from $(X,D)$ by blowing up along strata. Many invariants in logarithmic geometry are invariant under log modifications, and a canonical example of this is the following theorem for logarithmic Gromov-Witten invariants:
\begin{theo}[\textup Abramovich-Wise \cite{AW18}\textup]
Let $h:Y\rightarrow X$ be a logarithmic modification of logarithmically smooth schemes inducing a projection $\pi: \overline{\mathcal{M}}(Y)\rightarrow \overline{\mathcal{M}}(X)$. Then 
\begin{equation*}
    \pi_\ast\left( [\overline{\mathcal{M}}(Y)]^{\text{vir}} \right) = [\overline{\mathcal{M}}(X)]^\text{vir}
\end{equation*}
\end{theo}  

Thus we can define a logarithmic cobordism ring $\omega^{\text{log + mod}}$ that imposes the relation $(X',D') = (X,D)$ where $(X',D')$ is a logarithmic modification of $(X,D)$. In logarithmic geometry one expects that (logarithmic) enumerative invariants of a pair $(X,D)$ should be recoverable from the invariants of $X$ and its strata (each with no boundary), and the following result is part of the same general philosophy:
\begin{theo}
Consider the diagram 
\begin{center}
\begin{tikzcd}
\omega^{LP} \arrow[d] \arrow[rd] &                    \\
\omega^{Log} \arrow[r]           & \omega^{Log + Mod}
\end{tikzcd}
\end{center}
where the map $\omega^{LP}$ is the morphism from the Levine-Pandharipande cobordism ring to the logarithmic cobordism ring, and the map $\omega^{Log}\rightarrow \omega^{Log +Mod}$ is the quotient map. Then the composition $\omega^{LP}\rightarrow \omega^{Log+Mod}$ is an isomorphism.
\end{theo}

\section{Generators for $\omega^{\text{Log}}_3$}

We now outline our strategy for proving the zero dimension log DT conjecture. Our proof makes use of the fact that the logarithmic cobordism ring respects log chern numbers. What we mean is that if $(X,D)\in \omega^{\text{Log}}$ is an snc pair of dimension $n$ and we let $T_{(X,D)}$ denote the logarithmic tangent bundle, then for any partition $\lambda$ of $n$ we have a ring homomorphism 
\begin{equation*}
    c_\lambda: \omega^{\text{Log}}_n\rightarrow\mathbb{Q},\quad (X,D)\mapsto c_\lambda(T_{(X,D)}) = \prod_i c_{\lambda_i}(T_{(X,D)})
\end{equation*}
where the $\lambda_i$ are the parts of $\lambda$. In particular, if a 3-fold pair $(X,D)$ can be expressed as a rational sum 
\begin{equation*}
    (X,D) = \sum_i r_i(Y_i,E_i)
\end{equation*}
such that the zero dimensional log DT conjecture holds for each $(Y_i,E_i)$, then the conjecture must also be true for $(X,D)$. Indeed, by clearing denominators we can find integers $r,a_i$ such that 
\begin{equation*}
    a(X,D) = \sum_i a_i(Y_i,E_i)
\end{equation*}
Which then tells us
\begin{align*}
    Z(X,D)^a &= \prod_i M(q)^{a_i \int_{Y_i}c_3(T^\text{log}_{Y_i}\otimes K^{\text{log}}_{Y_i})}  \\
    &=  M(q)^{\sum_i a_i \int_{Y_i}c_3(T^\text{log}_{Y_i}\otimes K^{\text{log}}_{Y_i})} 
\end{align*}
The quantity $\int_X c_3(T^\text{log}_X\otimes K^{\text{log}}_X)$ is an invariant $\omega^{\text{Log}}_3\rightarrow \mathbb{Q}$ of $\omega^{\text{Log}}_3$ because it can be expressed as an integer sum of the basic log chern invariants $c_\lambda(T^{\text{log}})$, which tells us that 
\begin{equation*}
c_3(T^{\text{log}}_X\otimes K^{\text{log}}_X) = \sum_i r_i c_3(T^{\text{log}}_Y\otimes K^{\text{log}}_Y)
\end{equation*}
where all log structures are taken with respect to the corresponding boundary. So we get 
\begin{equation*}
    Z(X,D)^a = M(-q)^{a \int_X c_3(T^\text{log}\otimes K^{\text{log}}_X)}
\end{equation*}
Since $Z(X,D)(0) =1$ and $M(0) = 1$, then we conclude 
\begin{equation*}
    Z(X,D) = M(-q)^{ \int_X c_3(T^\text{log}\otimes K^{\text{log}}_X)}
\end{equation*}
Therefore, our strategy will be the prove that $\omega^{\text{Log}}_3$ is generated by pairs on which the conjecture is known to be true. In particular, by the previous work outlined in the introduction, the conjecture is known to be true for pairs without boundary $(X,\emptyset)$ and for \textit{smooth pairs} $(X,D)$, by which we mean that $D\subset X$ is a smooth divisor. With this in mind, to prove the conjecture it suffices to prove the following

\begin{theo}
The dimension three piece $\omega^{\text{Log}}_3$ of the logarithmic cobordism ring is generated over $\mathbb{Q}$ by the pairs $(\mathbb{P}^3,\emptyset)$, $(\mathbb{P}^2\times\mathbb{P}^1, \emptyset)$, $(\mathbb{P}^1\times\mathbb{P}^1\times\mathbb{P}^1,\emptyset)$, $(\mathbb{P}^3,\mathbb{P}^2)$ and $(\mathbb{P}^2\times\mathbb{P}^1, \mathbb{P}^1\times\mathbb{P}^1)$.
\end{theo}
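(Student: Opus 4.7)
My plan is to follow the Levine--Pandharipande strategy of reducing any snc pair to a finite list of generators via iterated double point relations. The argument splits into three stages.

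\emph{Stage 1 (shrink the boundary).} Given $[X,D]$ with $D=D_1+\cdots+D_k$ and $k\ge 2$, I would apply the degeneration to the normal cone of $D_k$: take $\mathcal{X}=\mathrm{Bl}_{D_k\times\{0\}}(X\times\mathbb{P}^1)$ and let $\mathcal{D}$ be the proper transform of $(D_1+\cdots+D_{k-1})\times\mathbb{P}^1$. The snc hypothesis guarantees transversality with the double point locus, so Definition 2.1 applies and yields
\begin{equation*}
[X,D_1+\cdots+D_{k-1}] \;=\; [X,D] \;+\; [\mathbb{P}(N_{D_k/X}\oplus\mathcal{O}),\, E_\infty+\textstyle\sum_{i<k}\pi^*(D_i\cap D_k)],
\end{equation*}
where $E_\infty\cong D_k$ is the section at infinity and $\pi$ is the bundle map. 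Solving for $[X,D]$ trades one boundary component for a bundle pair over the surface $D_k$. Iterating on the first summand (and on each bundle pair, regarded as a fresh snc pair with its own boundary components) reduces the problem to two families: (a) closed $3$-folds $[X,\emptyset]$, and (b) ``section-only'' bundle pairs $[\mathbb{P}(L\oplus\mathcal{O}),S_\infty]$ over surfaces.

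\emph{Stage 2 (absorb smooth divisors into bundle pairs).} For $[X,D]$ with smooth boundary, Example 2.3 gives $[X,D]=[X,\emptyset]-[\mathbb{P}(N_{D/X}\oplus\mathcal{O}),E_\infty]$, which is again of types (a) plus (b). Thus after Stages 1 and 2, every class in $\Omega^{\text{Log}}_3$ is a $\mathbb{Q}$-combination of such classes.

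\emph{Stage 3 (identify generators).} For type (a), Theorem 1.1 tells us $\Omega^{LP}_3\otimes\mathbb{Q}$ is generated by $[\mathbb{P}^3],[\mathbb{P}^2\times\mathbb{P}^1],[(\mathbb{P}^1)^3]$, and this passes to $\Omega^{\text{Log}}_3$ because LP double-point data is the $\mathcal{D}=\emptyset$ case of Definition 2.1. For type (b), I would use the identification $[\mathbb{P}(L\oplus\mathcal{O}),S_\infty]\leftrightarrow[S,L]$ promised after Example 2.3, and then prove a generation statement for the cobordism ring $\Omega^{\text{Log}}_{2,1^1}$ of surfaces with one line bundle: namely, that it is generated over $\mathbb{Q}$ by the four pairs $(\mathbb{P}^2,\mathcal{O}(1))$, $(\mathbb{P}^2,\mathcal{O})$, $(\mathbb{P}^1\times\mathbb{P}^1,\mathcal{O}(1,0))$, $(\mathbb{P}^1\times\mathbb{P}^1,\mathcal{O})$. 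The two trivial-line-bundle generators, via Example 2.2 applied fiberwise, collapse to closed $3$-folds: $[S\times\mathbb{P}^1,S_\infty]=\tfrac{1}{2}[S\times\mathbb{P}^1,\emptyset]$. The two non-trivial generators match exactly the listed snc pairs: $(\mathbb{P}^2,\mathcal{O}(1))\leftrightarrow[\mathbb{P}^3,\mathbb{P}^2]$ (since $N_{\mathbb{P}^2/\mathbb{P}^3}=\mathcal{O}(1)$), and $(\mathbb{P}^1\times\mathbb{P}^1,\mathcal{O}(1,0))\leftrightarrow[\mathbb{P}^2\times\mathbb{P}^1,\mathbb{P}^1\times\mathbb{P}^1]$.

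\emph{Main obstacle.} The technical heart is the generation statement for $\Omega^{\text{Log}}_{2,1^1}$ in Stage 3, a logarithmic analogue of LP's statement for algebraic cobordism of varieties with a line bundle. It requires double-point degenerations that simultaneously manipulate the surface and the line bundle, and the formal-group-law calculations LP use must be adapted to the logarithmic setting. A secondary subtlety is ensuring that Stage 1 terminates: a single application merely trades a boundary component on $X$ for a bundle pair whose snc boundary has just as many components, so termination relies on a well-founded order (e.g.\ lexicographic in dimension of the base and number of boundary components) together with the fact that the resulting bundle pairs are ultimately absorbed by Stage 3.
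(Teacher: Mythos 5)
Your overall strategy --- trading boundary components for bundle data via deformation to the normal cone and then appealing to Levine--Pandharipande-type bases --- is the paper's strategy, and your Stage 2 and the generator identifications in Stage 3 essentially match the paper's Theorem 3.8 and Remark 3.9 (modulo the fact that $(\mathbb{P}^2,\mathcal{O}(1))$ maps to $(\mathbb{P}(\mathcal{O}_{\mathbb{P}^2}(1)\oplus\mathcal{O}),\mathbb{P}^2)$, which equals $(\mathbb{P}^3,\mathbb{P}^2)$ only after a further normal-cone relation for a hyperplane). But Stage 1 has a genuine gap: the claimed endpoint of the iteration is wrong. Trading $D_k$ produces the bundle pair $(\mathbb{P}(N_{D_k/X}\oplus\mathcal{O}),\,\mathbb{P}(N_{D_k/X})\cup\eta^{-1}(Z))$ whose boundary has as many components as you started with, and trading one of those components produces a bundle pair over the surface $\eta^{-1}(Z_i)$ --- a $\mathbb{P}^1$-bundle over a curve --- whose base again carries boundary (the section over $Z_i$ and the fibers over $Z_i\cap Z_j$). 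So iteration never reaches your family (b) of ``section-only'' pairs over boundaryless surfaces, and your proposed lexicographic order is left unspecified exactly where termination is in doubt. The paper resolves this by making the \emph{dimension of the base} the inductive variable: it records the bundle pair as the triple $(D_k,Z,N_{D_k/X})\in\Omega^{\text{Log}}_{2,1^1}$ via the morphism $(X,D,L)\mapsto(\mathbb{P}(L\oplus\mathcal{O}),X\cup\eta^{-1}(D))$ of its Proposition 3.3 --- a morphism you invoke implicitly without checking that it sends degenerations to degenerations, which is the content of that proposition --- then repeats the trade on the surface to land in curve tuples $(C,D,L,M)\in\Omega^{\text{Log}}_{1,1^2}$, and finally trades each point of $D$ against a copy of $(\mathbb{P}^1,p,\mathcal{O},\mathcal{O})$, whose induced threefold is $\tfrac{1}{8}((\mathbb{P}^1)^3,\emptyset)$. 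Your proposal skips this curve-with-two-line-bundles stratum entirely, and it is where the induction actually bottoms out.

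Your diagnosis of the ``main obstacle'' is correspondingly off: no logarithmic analogue of Lee--Pandharipande, and no adaptation of formal-group-law computations, is needed. After the descent, the only classes remaining are indexed by \emph{boundaryless} tuples in the ordinary Lee--Pandharipande rings --- $(C,\emptyset,L,M)\in\Omega_{1,1^2}$ with basis $(\mathbb{P}^1,\mathcal{O}(1),\mathcal{O})$, $(\mathbb{P}^1,\mathcal{O},\mathcal{O}(1))$, $(\mathbb{P}^1,\mathcal{O},\mathcal{O})$; the classes $(D,N_{D/X})\in\Omega_{2,1^1}$ with basis $(\mathbb{P}^2,\mathcal{O})$, $(\mathbb{P}^2,\mathcal{O}(1))$, $(\mathbb{P}^1\times\mathbb{P}^1,\mathcal{O})$, $(\mathbb{P}^1\times\mathbb{P}^1,\mathcal{O}(1,0))$ arising from smooth pairs via $(X,D)=(X,\emptyset)-(D,N_{D/X})$; and $\Omega_3$ for closed threefolds --- and the LP relations are simply pushed into $\Omega^{\text{Log}}_3$ along the bundle-pair morphism. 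So the hard input is a citation, not a new logarithmic computation; what your write-up is actually missing is the two-step dimensional descent (Propositions 3.5--3.7 of the paper) and the verification that the bundle-pair construction respects double point relations, without which neither termination nor the transport of the LP bases is justified.
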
  

Our proof of Theorem 3.1 will be via an inductive proof on the number of irreducible components $D=D_1\cup\dots \cup D_r$ of the boundary divisor and we will also make use of results of Lee-Pandharipande \cite{LeeP} on algebraic cobordism of varieties with bundles. 
\\
The basic idea of our strategy is to use degenerations to trade boundary divisor components for line bundles, and we now outline this. We know that Conjecture 1.3 is true for smooth pairs and pairs with no boundary, so suppose we have a threefold with an snc divisor $D'= D\cup E$ that has two irreducible components. Consider the deformation to the normal cone for $E\subset X$
\begin{equation*}
    \mathcal{X} = \text{Bl}_{E\times 0} X\times \mathbb{P}^1\xrightarrow{\pi} \mathbb{P}^1
\end{equation*}
The central fiber is $\pi^{-1}(0) = X\cup_E \mathbb{P}(L\oplus \mathcal{O})$ where $L$ is the normal bundle of $E$ inside of $X$. We let $\eta: \mathbb{P}(L\oplus \mathcal{O})\rightarrow E$ denote the bundle map. For the family of snc divisors, consider the strict transform $\mathcal{D} = \widetilde{D\times\mathbb{P}^1}$. Let $Z = D\cap E$, then the relation we extract from the degeneration $\mathcal{D}\subset \mathcal{X}\rightarrow \mathbb{P}^1$ is 
\begin{equation*}
    (X,D) = (X,D\cup E) + (\mathbb{P}(L\oplus \mathcal{O}), E\cup \eta^{-1}(Z))
\end{equation*}
where $E$ sits inside of $\mathbb{P}(L\oplus\mathcal{O})$ as $\mathbb{P}(L)$. The above equality, together with the fact that the conjecture is known on $(X,D)$, tells us that to prove Conjecture 1.3 for $(X,D+E)$ it suffies to prove the conjecture for $(\mathbb{P}(L\oplus \mathcal{O}), E\cup \eta^{-1}(Z))$. The idea now is that to degenerate $(\mathbb{P}(L\oplus \mathcal{O}), E\cup \eta^{-1}(Z))$ it suffices to degenerate the triple $(E,Z,L)$, so we pass to ring $\omega^{\text{Log}}_{1^1}$ of varieties with boundary divisor and a single line bundle. Formally speaking, the reason for why it suffices to degenerate the tripple $(E,Z,L)$, rather than $(\mathbb{P}(L\oplus \mathcal{O}), E\cup \eta^{-1}(Z))$, is because there is a morphism 
\begin{equation*}
    \omega^{\text{Log}}_{1^r}\rightarrow \omega^{\text{Log}}_{1^{r-1}},\quad (X,D,L_1,...,L_r)\mapsto (\mathbb{P}(L_r\oplus \mathcal{O}), X\cup \eta^{-1}(D), \eta^\ast L_1,...,\eta^\ast L_{r-1})
\end{equation*}
This procedure of trading irreducible components for line bundles allows us to make use of results of Lee-Pandharipande on the algebraic cobordism of varieties with line bundles.

We now begin to prove some lemmas that will be needed in the course of our proof of Conjecture 1.3.

\begin{prop}
    There is a morphism $\omega\rightarrow\omega^{\text{Log}}$ from the Levine-Pandharipande algebraic cobordism ring to the logarithmic cobordism ring taking the class $(X)$ to the class $(X,\emptyset)$
\end{prop}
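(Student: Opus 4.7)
I would define $\phi \colon \Omega \to \Omega^{\text{Log}}$ on generators by $[X] \mapsto [X, \emptyset]$. Multiplicativity is immediate from $(X, \emptyset) \cdot (Y, \emptyset) = (X \times Y, \emptyset)$ under the ring structure of Section 2, so the only content is to check that $\phi$ sends every Levine--Pandharipande (LP) double point relation to zero in $\Omega^{\text{Log}}$. I expect this to follow cleanly by combining the two-term relation of Definition 2.1 with repeated use of the deformation-to-the-normal-cone identity of Example 2.3.

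Consider an LP double point degeneration $\pi \colon \mathcal{X} \to \mathbb{P}^1$ with smooth general fiber $X$ and singular fiber $A \cup_E B$, giving the LP relation
$$[X] - [A] - [B] + [\mathbb{P}(N_{E/A} \oplus \mathcal{O})] = 0 \in \Omega.$$
Applying Definition 2.1 to this same $\pi$ with $\mathcal{D} = \emptyset$ produces
$$[X, \emptyset] = [A, E] + [B, E] \in \Omega^{\text{Log}},$$
while Example 2.3 applied to the smooth divisors $E \subset A$ and $E \subset B$ gives
$$[A, \emptyset] = [A, E] + [\mathbb{P}(N_{E/A} \oplus \mathcal{O}), E_0^A], \qquad [B, \emptyset] = [B, E] + [\mathbb{P}(N_{E/B} \oplus \mathcal{O}), E_0^B],$$
where $E_0^A, E_0^B$ denote the sections cut out by the respective $N$-sub-bundles. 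Combining these three identities yields
$$\phi\bigl([X] - [A] - [B]\bigr) = -[\mathbb{P}(N_{E/A} \oplus \mathcal{O}), E_0^A] - [\mathbb{P}(N_{E/B} \oplus \mathcal{O}), E_0^B] \in \Omega^{\text{Log}}.$$

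It then remains to identify the right-hand side with $-[\mathbb{P}(N_{E/A} \oplus \mathcal{O}), \emptyset]$, which is exactly $-\phi([\mathbb{P}(N_{E/A} \oplus \mathcal{O})])$ and closes the verification. Smoothness of $\mathcal{X}$ along $E$ forces $N_{E/A} \otimes N_{E/B} \cong \mathcal{O}_E$, so tensoring by $N_{E/A}$ yields an isomorphism $\mathbb{P}(N_{E/B} \oplus \mathcal{O}) \xrightarrow{\sim} \mathbb{P}(N_{E/A} \oplus \mathcal{O}) =: P$ under which $E_0^B$ is sent to the ``infinity'' section $E_\infty^A$ of $P$. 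The remaining identity then reduces to $[P, E_0^A] + [P, E_\infty^A] = [P, \emptyset]$, which I would obtain from one further application of Example 2.3: deforming $P$ to the normal cone of $E_0^A \subset P$, an Euler sequence computation gives $N_{E_0^A/P} \cong N_{E/A}^{-1}$, so the exceptional $\mathbb{P}^1$-bundle $\mathbb{P}(N_{E/A}^{-1} \oplus \mathcal{O})$ is again isomorphic to $P$ (via tensoring with $N_{E/A}$) and its zero section is identified with $E_\infty^A$.

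The main obstacle is purely bookkeeping: one has to track carefully which section of each projective bundle plays the role of ``zero'' or ``infinity'' after each isomorphism, particularly under $\mathbb{P}(N_{E/B} \oplus \mathcal{O}) \cong P$, which exchanges the two. Once the identifications are made, no further ingredient beyond Definition 2.1 and Example 2.3 is required, and the same argument should adapt verbatim to the line-bundle variants $\Omega_{1^r} \to \Omega^{\text{Log}}_{1^r}$ should they be needed later.
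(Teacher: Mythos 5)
Your proposal is correct and follows essentially the same route as the paper's proof: the degeneration with $\mathcal{D}=\emptyset$ gives $[X,\emptyset]=[A,E]+[B,E]$, deformation to the normal cone for $E\subset A$ and $E\subset B$ trades $[A,E],[B,E]$ for $[A,\emptyset],[B,\emptyset]$ plus projective-bundle terms, and a final normal-cone degeneration on $\mathbb{P}(N_{E/A}\oplus\mathcal{O})$ together with $N_{E/A}\cong N_{E/B}^{\vee}$ assembles those terms into $[\mathbb{P}(N_{E/A}\oplus\mathcal{O}),\emptyset]$. Your explicit bookkeeping of the zero and infinity sections (which the paper suppresses by writing both boundary divisors simply as $D$) and your check of multiplicativity are the only additions, and both are consistent with the paper's argument.
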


\begin{proof}

Write $\omega = \mathcal{M}^{'} / \mathcal{R}^{'}$. Suppose we have a double point degeneration $X \rightsquigarrow A\cup_D B$ given by $\mathcal{X} \rightarrow \mathbb{P}^1$, which induces the relation $[X] - [A] - [B] + [\mathbb{P}_D] \in \mathcal{R}^{'}$. Our goal is to show
\begin{equation*}
    [X,\emptyset] - [A,\emptyset] - [B,\emptyset] + [\mathbb{P}(N_{D/A}\oplus\mathcal{O}),\emptyset] \in \mathcal{R}
\end{equation*}
The degeneration $X \rightsquigarrow A\cup_D B$ tells us that 
\begin{equation*}
    a = [X,\emptyset] - [A,D] - [B,D]  \in \mathcal{R}
\end{equation*}
Next, degeneration to the normal cone for $D\subset A$ and degeneration to the normal cone for $D\subset B$ gives us the relations
\begin{align*}
    &b = [A,\emptyset] - [A,D] - [\mathbb{P}(N_{D/A}\oplus\mathcal{O}),D]  \in \mathcal{R}\\
    &c = [B,\emptyset] - [B,D] - [\mathbb{P}(N_{D/B}\oplus\mathcal{O}),D]  \in \mathcal{R}
\end{align*}
We now have
\begin{equation*}
    a - b - c = [X,\emptyset] - [A,D] - [B,D] + [\mathbb{P}(N_{D/A}\oplus\mathcal{O}),D] + [\mathbb{P}(N_{D/B}\oplus\mathcal{O}),D] \in \mathcal{R}
\end{equation*}
Using that $A$ and $B$ meet transversally along $D$, we get that $N_{A/D} = \check{N_{B/D}}$. Therefore, since $D = \mathbb{P}(N_{D/A})\subset \mathbb{P}(N_{D/A}\oplus\mathcal{O}) $ has normal bundle $\check{N_{A/D}} = N_{B/D}$, the degeneration to the normal cone of $D\subset \mathbb{P}(N_{D/A}\oplus\mathcal{O})$ gives us that 
\begin{equation*}
    (\mathbb{P}(N_{D/A}\oplus\mathcal{O},\emptyset) = (\mathbb{P}(N_{D/A}\oplus\mathcal{O},D) + \mathbb{P}(N_{D/B}\oplus\mathcal{O},D)
\end{equation*}
So we conclude that 
\begin{equation*}
    a - b - c = [X,\emptyset] - [A,D] - [B,D] + [\mathbb{P}(N_{D/A}\oplus\mathcal{O}),\emptyset] \in \mathcal{R}
\end{equation*}

\end{proof}

Next, we prove the following proposition which will be necessary when we wish to trade irreducible components of the boundary divisor for line bundles.

\begin{prop}
    There is a morphism $\omega^{\text{Log}}_{1^r}\rightarrow\omega^{\text{Log}}_{1^{r-1}}$ taking the class $(X,D,L_1,...,L_r)$ to the class $(\mathbb{P}(L_r\oplus \mathcal{O}), X\cup \eta^{-1}(D), \eta^\ast L_1,...,\eta^\ast L_{r-1})$ where $\eta: \mathbb{P}(L\oplus\mathcal{O})\rightarrow X$ is the bundle map
\end{prop}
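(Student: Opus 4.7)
The plan is to define the map at the level of the free $\mathbb{Q}$-module of generators by the formula in the statement, and then verify that it descends through the double point relations. Explicitly, I set
\[
\Phi(X,D,L_1,\dots,L_r) := (\mathbb{P}(L_r\oplus\mathcal{O}),\, \sigma(X)\cup\eta^{-1}(D),\, \eta^*L_1,\dots,\eta^*L_{r-1}),
\]
where $\eta:\mathbb{P}(L_r\oplus\mathcal{O})\to X$ is the bundle map and $\sigma:X\hookrightarrow\mathbb{P}(L_r\oplus\mathcal{O})$ is the section determined by the canonical quotient $L_r\oplus\mathcal{O}\twoheadrightarrow\mathcal{O}$. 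First I verify that the target is a legitimate generator of $\Omega^{\text{Log}}_{1^{r-1}}$: the total space $\mathbb{P}(L_r\oplus\mathcal{O})$ is smooth as a $\mathbb{P}^1$-bundle over the smooth $X$, and $\sigma(X)\cup\eta^{-1}(D)$ is snc because $\eta^{-1}(D)$ is snc (as the preimage of an snc divisor under the smooth map $\eta$) and $\sigma(X)$ is transverse to every fiber of $\eta$, hence to $\eta^{-1}(D)$.

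The main task is showing that double point relations go to double point relations. Given double point data $\pi:\mathcal{X}\to\mathbb{P}^1$ with snc divisor $\mathcal{D}\subset\mathcal{X}$ and line bundles $\mathcal{L}_1,\dots,\mathcal{L}_r\in\mathrm{Pic}(\mathcal{X})$, I form the projective bundle $\mathcal{P}:=\mathbb{P}(\mathcal{L}_r\oplus\mathcal{O})$ with bundle map $\eta:\mathcal{P}\to\mathcal{X}$, and consider the composition $\pi\circ\eta:\mathcal{P}\to\mathbb{P}^1$. The total space $\mathcal{P}$ is smooth, the composition is smooth away from $0$, and the central fiber decomposes as
\[
\mathbb{P}(\mathcal{L}_r|_A\oplus\mathcal{O})\,\cup_{\mathbb{P}(\mathcal{L}_r|_E\oplus\mathcal{O})}\,\mathbb{P}(\mathcal{L}_r|_B\oplus\mathcal{O}),
\]
which is a union of two smooth varieties meeting transversally along $\eta^{-1}(E)$. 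I equip $\mathcal{P}$ with the divisor $\widetilde{\mathcal{D}}:=\sigma(\mathcal{X})\cup\eta^{-1}(\mathcal{D})$ and pulled-back line bundles $\eta^*\mathcal{L}_1,\dots,\eta^*\mathcal{L}_{r-1}$. Restricting this degeneration datum to the general fiber and to each central component then produces exactly the image under $\Phi$ of the original relation on $(\mathcal{X},\mathcal{D},\mathcal{L}_\bullet)$, so $\Phi$ descends to the quotient.

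The main obstacle will be checking the three snc-compatibility conditions of Definition 2.1 for the data $(\mathcal{P},\widetilde{\mathcal{D}})$: flatness of $\widetilde{\mathcal{D}}\to\mathbb{P}^1$, the snc property of each fiber intersection $\widetilde{\mathcal{D}}\cap\mathcal{P}_t$, and transversality of $\widetilde{\mathcal{D}}$ with the double point locus $\eta^{-1}(E)$. The first two follow cleanly from smoothness of $\eta$ combined with the section property of $\sigma$, which forces $\sigma(\mathcal{X})$ to be transverse to every $\eta$-fiber. The transversality with the double point locus is the most delicate step: it uses the hypothesis that $\mathcal{D}$ meets $E$ transversally inside $\mathcal{X}$, so that $\eta^{-1}(\mathcal{D})\cap\eta^{-1}(E)=\eta^{-1}(\mathcal{D}\cap E)$ is transverse, while $\sigma(\mathcal{X})\cap\eta^{-1}(E)=\sigma(E)$ is automatically transverse by the section property. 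Once these are verified, the resulting double point relation in $\Omega^{\text{Log}}_{1^{r-1}}$ is precisely the $\Phi$-image of the input relation, and $\Phi$ is a well-defined $\mathbb{Q}$-linear map on the quotient.
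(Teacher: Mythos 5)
Your argument is essentially identical to the paper's proof: given degeneration data $(\mathcal{X},\mathcal{D},\mathcal{L}_1,\dots,\mathcal{L}_r)$, the paper likewise forms $\widetilde{\eta}:\mathbb{P}(\mathcal{L}_r\oplus\mathcal{O})\to\mathcal{X}$, takes the divisor $\mathcal{X}\cup\widetilde{\eta}^{-1}(\mathcal{D})$ and the pulled-back bundles $\widetilde{\eta}^*\mathcal{L}_i$, and observes that $\pi\circ\widetilde{\eta}$ gives the induced degeneration. The only differences are cosmetic and to your credit: the paper embeds $\mathcal{X}$ as the section $\mathbb{P}(\mathcal{L}_r)$ where you use the section cut out by the quotient $\mathcal{L}_r\oplus\mathcal{O}\twoheadrightarrow\mathcal{O}$ (an equivalent choice up to dualizing $L_r$), and you explicitly verify the flatness, fiberwise snc, and transversality-with-$E$ conditions that the paper asserts without proof.
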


\begin{proof}

Suppose we are given a degeneration of $[X,D,L_1,...,L_r]$. So we have $\mathcal{D} \subset \mathcal{X}\xrightarrow{\pi}\mathbb{P}^1$ satisfying the necessary properties, such that $X$ is a smooth fiber, $\pi^{-1}(0) = A\cup_E B$, and the total space of the degeneration carries line bundles $\mathcal{L}_1,...,\mathcal{L}_r\in\text{Pic}(\mathcal{X})$. From this data, we get an induced degeneration of $(\mathbb{P}(L_r\oplus \mathcal{O}), X\cup \eta^{-1}(D), \eta^\ast L_1,...,\eta^\ast L_{r-1})$. Indeed, consider 
\begin{equation*}
    \widetilde{\eta}: \mathbb{P}(\mathcal{L}_r\oplus\mathcal{O})\rightarrow \mathcal{X}
\end{equation*}
Also let
\begin{equation*}
    \widetilde{\mathcal{D}} = \mathcal{X}\cup\widetilde{\eta}^{-1}(\mathcal{D}),\quad  \widetilde{\mathcal{L}}_i = \widetilde{\eta}^\ast \mathcal{L}_i, \quad i=1,...,r-1
\end{equation*}
where $\mathcal{X}$ sits inside $\mathbb{P}(\mathcal{L}_r\oplus\mathcal{O})$ as $\mathbb{P}(\mathcal{L}_r)$. The data of $\pi\circ\widetilde{\eta}: \mathbb{P}(\mathcal{L}_r\oplus\mathcal{O})\rightarrow \mathbb{P}^1 $ together with the snc divisor $\widetilde{\mathcal{D}}$ and line bundles $\widetilde{\mathcal{L}}_1$,...,$\widetilde{\mathcal{L}}_{r-1}$ provides a degeneration of $(\mathbb{P}(L_r\oplus \mathcal{O}), X\cup \eta^{-1}(D), \eta^\ast L_1,...,\eta^\ast L_{r-1})$

\end{proof}

With these two propositions in hand we can now make progress towards proving Conjecture 1.3. Our aim is to prove 
\begin{theo}
$\omega^{\text{Log}}_3$ is generated over $\mathbb{Q}$ by smooth pairs and pairs with no boundary.
\end{theo}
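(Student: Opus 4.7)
The plan is to argue by induction on the number $r$ of irreducible components of the snc divisor $D$. The cases $r=0$ and $r=1$ are immediate, as these are by definition pairs with no boundary and smooth pairs respectively. For $r \geq 2$, write $D = D' \cup E$ with $E$ an irreducible component and $D'$ an snc divisor with $r-1$ components. The strategy is to use deformation to the normal cone of $E$ to write $[X, D' \cup E]$ as a difference of a class with fewer boundary components (handled by induction) and a projective-bundle class with boundary, which we realize as the image of a surface-with-line-bundle class and reduce to the boundary-free case.

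\textbf{The main double point relation.} Let $\mathcal{X} = \mathrm{Bl}_{E\times 0}(X\times\mathbb{P}^1) \to \mathbb{P}^1$ be the deformation to the normal cone of $E$, whose central fiber is $X\cup_E \mathbb{P}(L \oplus \mathcal{O})$ with $L = N_{E/X}$. Choose the family of snc divisors to be the strict transform $\mathcal{D} = \widetilde{D'\times\mathbb{P}^1}$, and set $Z = D'\cap E$. The resulting double point relation of Definition 2.1 reads
\begin{equation*}
[X, D'] = [X, D'\cup E] + [\mathbb{P}(L\oplus\mathcal{O}), E \cup \eta^{-1}(Z)],
\end{equation*}
where $\eta: \mathbb{P}(L\oplus\mathcal{O}) \to E$ is the bundle projection. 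Solving for $[X, D'\cup E]$ and applying the inductive hypothesis to $[X, D']$, the remaining task is to show that $[\mathbb{P}(L\oplus\mathcal{O}), E\cup \eta^{-1}(Z)]$ lies in the span of smooth pairs and pairs with no boundary.

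\textbf{Reducing to a surface sub-lemma.} By Proposition 3.2, this remaining class is exactly the image of the surface-with-line-bundle class $(E, Z, L) \in \Omega^{\text{Log}}_{1^1}$ under the morphism $\Omega^{\text{Log}}_{1^1}\to\Omega^{\text{Log}}$. Crucially, any boundary-free class $(E', \emptyset, L')$ maps to $(\mathbb{P}(L'\oplus\mathcal{O}), E')$, which is itself a smooth pair in $\Omega^{\text{Log}}_3$. The theorem therefore reduces to the following sub-lemma: every class $(E,Z,L)$ of a smooth projective surface with snc divisor and one line bundle is a rational combination of boundary-free classes $(E', \emptyset, L')$. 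I would prove this by a parallel induction on the number of components of $Z$, peeling off a component via deformation to its normal cone with $L$ pulled back to the total space; the excess term is a Hirzebruch-type $\mathbb{P}^1$-bundle over a curve whose boundary is the zero section together with finitely many fibers, a configuration amenable to direct further degeneration. The results of Lee-Pandharipande \cite{LeeP} on cobordism of varieties with line bundles then supply the endgame: once the boundary has been emptied, the remaining classes are rationally spanned by explicit generators.

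\textbf{Main obstacle.} The principal difficulty lies in the surface sub-lemma rather than the outer induction. A naive induction on the component count of $Z$ need not strictly decrease the count in the excess term, since peeling off a component $Z_0$ introduces a new fiber above each intersection point of $Z_0$ with the remaining components of $Z$. I therefore expect to organize the sub-lemma via a lexicographic complexity measure on $(E,Z,L)$, or else to first clear the newly-introduced fiber components (each a disjoint $\mathbb{P}^1$ with trivial normal bundle inside the Hirzebruch surface) before recursing on the zero-section component. Carrying out this bookkeeping cleanly is the main technical content of the argument.
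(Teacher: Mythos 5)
Your outer induction and main double point relation coincide exactly with the paper's (Proposition 3.5), and your reduction of the theorem to a statement about surface classes $(E,Z,L)$ via the component-for-bundle morphism is likewise the paper's move (Proposition 3.3, which you cite as 3.2). The genuine gap is in the surface sub-lemma, and moreover the obstacle you flag there is not the essential one. The fiber components you worry about are comparatively harmless: each fiber $F = \eta^{-1}(w)$ has trivial normal bundle and trivial restriction of the pulled-back bundle, so peeling it off contributes an excess of the form $(\mathbb{P}^1\times\mathbb{P}^1, (\mathbb{P}^1\times 0)\cup(0\times\mathbb{P}^1), \mathcal{O}) = \frac{1}{4}((\mathbb{P}^1)^2,\emptyset,\mathcal{O})$, already boundary-free. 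In fact the paper never recurses on the ruled excess term as a surface pair at all: it applies the trading morphism a \emph{second} time, replacing $(\mathbb{P}(M\oplus\mathcal{O}), D_k\cup\eta^{-1}(Z), \eta^\ast L)$ by the curve tuple $(D_k, Z, L|_{D_k}, M)$ with \emph{two} line bundles (Propositions 3.3 and 3.6), and then inducts on the number of boundary points of the curve, where peeling off a point produces the excess $(\mathbb{P}^1,\mathrm{pt},\mathcal{O},\mathcal{O})$, whose induced threefold is explicitly $(\mathbb{P}^1,0)^3 = \frac{1}{8}((\mathbb{P}^1)^3,\emptyset)$ (Proposition 3.7). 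The recursion thus terminates by dimension descent, with no lexicographic bookkeeping needed.

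The step your sketch cannot complete is removing the zero-section component, which is the deeper issue. Deformation to the normal cone of $\mathbb{P}(M)\subset\mathbb{P}(M\oplus\mathcal{O})$ gives, since the normal bundle of the zero section is $M^\vee$,
\begin{equation*}
(\mathbb{P}(M\oplus\mathcal{O}),\mathbb{P}(M),\eta^\ast L) = (\mathbb{P}(M\oplus\mathcal{O}),\emptyset,\eta^\ast L) - (\mathbb{P}(M^\vee\oplus\mathcal{O}),\mathbb{P}(M^\vee),\,\cdot\,),
\end{equation*}
and the correction term has exactly the same shape with $M$ replaced by $M^\vee$, so naive peeling oscillates forever; no complexity measure fixes this, and indeed the paper exploits this very relation in Section 5 to show such classes carry nontrivial invariants satisfying $\phi(D,L) = -\phi(D,L^\vee)$. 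This is precisely where structural input must enter: the zero-section class is the image of $(C,\emptyset,L,M)\in\Omega^{\text{Log}}_{1,1^2}$, and the paper invokes Lee--Pandharipande's basis of $\Omega_{1,1^2}$ --- curves with \emph{two} bundles, spanned by $(\mathbb{P}^1,\mathcal{O}(1),\mathcal{O})$, $(\mathbb{P}^1,\mathcal{O},\mathcal{O}(1))$, $(\mathbb{P}^1,\mathcal{O},\mathcal{O})$ --- together with explicit evaluations of the induced threefolds, e.g.\ $\frac{1}{2}(\mathbb{F}_1\times\mathbb{P}^1,\mathbb{P}^1\times\mathbb{P}^1)$ using $(\mathbb{P}^1,0)=\frac{1}{2}(\mathbb{P}^1,\emptyset)$. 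Your citation of Lee--Pandharipande is the right family of results but aimed at the wrong ring and the wrong moment: you plan to apply it ``once the boundary has been emptied,'' whereas it is needed precisely \emph{to empty} the last boundary component, at the level of curves with two line bundles. Without this second dimension-descent your induction does not close, even though your sub-lemma as stated (reduction to boundary-free $(E',\emptyset,L')$, each mapping to the smooth pair $(\mathbb{P}(L'\oplus\mathcal{O}),E')$) would indeed suffice for the theorem if proven.
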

To start off, we use our procedure of interchanging irreducible components for line bundles.
\begin{prop}
To prove $\omega^{\text{log}}_3$ is generated over $\mathbb{Q}$ by smooth pairs and pairs with no boundary it suffices to show that that any threefold $(\mathbb{P}(L\oplus \mathcal{O}), S\cup \eta^{-1}(D))$ obtained from $(S,D,L)$ can be written as a rational sum of smooth pairs and pairs with no boundary, where $S$ is a smooth surface, $D\subset S$ an snc divisor, and $L\in \text{Pic}(S)$.
\end{prop}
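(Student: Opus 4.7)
The plan is to induct on the number $r$ of irreducible components of the boundary divisor of an arbitrary snc $3$-fold pair $(X,D')$, with the goal that the class $(X,D') \in \Omega^{\mathrm{Log}}_3$ can be expressed as a $\mathbb{Q}$-linear combination of smooth pairs and pairs with no boundary. The base cases $r=0$ and $r=1$ are immediate, since such pairs already lie in the target generating set.

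For the inductive step, write $D' = D \cup E$ where $E$ is an irreducible component of the boundary, so that $D$ consists of the remaining $r-1$ components (and is itself snc, being a union of components of the snc divisor $D'$). Apply the deformation-to-the-normal-cone identity recorded just before the proposition: with $L = N_{E/X}$, $Z = D\cap E$, and $\eta\colon \mathbb{P}(L\oplus\mathcal{O})\to E$ the bundle projection, one has
\begin{equation*}
(X,D) \;=\; (X, D\cup E) \;+\; \bigl(\mathbb{P}(L\oplus\mathcal{O}),\, E\cup \eta^{-1}(Z)\bigr)
\end{equation*}
in $\Omega^{\mathrm{Log}}_3$, which rearranges to
\begin{equation*}
(X,D\cup E) \;=\; (X,D) \;-\; \bigl(\mathbb{P}(L\oplus\mathcal{O}),\, E\cup \eta^{-1}(Z)\bigr).
\end{equation*}

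The first term on the right, $(X,D)$, has an snc boundary with $r-1$ components, so by the inductive hypothesis it is expressible as a rational combination of smooth and empty-boundary pairs. The second term is, by construction, exactly the threefold class associated to the surface-divisor-line-bundle triple $(S,D,L) = (E, Z, L)$ appearing in the hypothesis of the proposition: $E$ is a smooth surface, $Z \subset E$ is an snc divisor (intersection of an snc divisor with one of its components), and $L = N_{E/X} \in \mathrm{Pic}(E)$. By the hypothesis of the proposition, this term is also expressible as a rational combination of smooth and empty-boundary pairs. Combining the two contributions yields such an expression for $(X, D\cup E)$, closing the induction.

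The genuinely hard step is not this reduction itself, which is essentially a bookkeeping induction on top of the normal cone identity already established in the preceding discussion; rather, it is the hypothesis being reduced to, namely resolving the projective-bundle pair $(\mathbb{P}(L\oplus\mathcal{O}),\, S\cup \eta^{-1}(D))$ into smooth and empty-boundary pairs for arbitrary triples $(S,D,L)$. I expect that step to proceed by passing to the cobordism ring $\Omega^{\mathrm{Log}}_{1^1}$ via Proposition 3.3 and invoking the Lee--Pandharipande results on algebraic cobordism of varieties with line bundles, as indicated in the introductory remarks to this section.
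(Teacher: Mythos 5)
Your proposal is correct and follows essentially the same route as the paper's own proof: the identical induction on the number of irreducible boundary components, the identical deformation-to-the-normal-cone degeneration with the strict transform of the remaining components, and the identical rearrangement isolating the term $\bigl(\mathbb{P}(L\oplus\mathcal{O}),\, E\cup\eta^{-1}(Z)\bigr)$, which is then covered by the proposition's hypothesis via the triple $(E,Z,N_{E/X})$. Your closing remark identifying this term as the image of a surface triple under the morphism of Proposition 3.3 matches the paper's own ``upshot'' comment immediately following its proof.
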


\begin{proof}
Let $(X,D)$ be a threefold snc pair, and suppose that $D = D_1\cup\dots D_k$ has $k$ irreducible components. By induction on the number $k$ of irreducible components, we can suppose that $(X,D_1\cup\dots\cup D_{k-1})$ can be written as a rational sum of pairs with either smooth boundary or no boundary. Consider the degeneration to the normal cone for $D_k$    
\begin{equation*}
    \mathcal{X} = \text{Bl}_{D_k\times 0} X\times \mathbb{P}^1\xrightarrow{\pi} \mathbb{P}^1
\end{equation*}
For our family of divisors we take the strict transform $\mathcal{D}$ of  $ (D_1\cup\dots \cup D_{k-1})\times \mathbb{P}^1$. From the degeneration $\mathcal{D}\subset\mathcal{X}\xrightarrow{\pi}\mathbb{P}^1$ we extract the relation 
\begin{equation*}
    (X,D_1\cup\dots D_{k-1}) = (X,D_1\cup\dots \cup D_{k-1}\cup D_k) + ((\mathbb{P}(L\oplus \mathcal{O}), D_k\cup \eta^{-1}(Z))
\end{equation*}
where $L$ is the normal bundle of $D_k$ inside of $X$, $D_k$ sits inside of $\mathbb{P}(L\oplus \mathcal{O})$ as $\mathbb{P}(L)$ and $Z = (D_1\cap D_k) \cup \dots\cup (D_{k-1}\cap D_k)$ is the boundary on $D_k$ induced from $X$. Rewriting the above equality, we get
\begin{equation*}
    (X,D_1\cup\dots \cup D_{k-1}\cup D_k) = (X,D_1\cup\dots D_{k-1}) - ((\mathbb{P}(L\oplus \mathcal{O}), D_k\cup \eta^{-1}(Z))
\end{equation*}
By induction on the number $k$ of irreducible components we know that $(X,D_1\cup\dots D_{k-1})$ can be written as a rational sum of pairs with either smooth boundary or no boundary, hence if we wish to prove $(X,D_1\cup\dots \cup D_k)$ can be expressed as a rational sum of pairs with smooth or no boundary it suffices to prove it for $((\mathbb{P}(L\oplus \mathcal{O}), D_k\cup \eta^{-1}(Z))$
\end{proof}
The upshot of the above proposition is that $((\mathbb{P}(L\oplus \mathcal{O}), D_k\cup \eta^{-1}(Z))$ is the image of the \textit{surface} snc line bundle tuple $(D_k, Z, L)$ under the morphism in Proposition 3.5. 

Hence, we now move on to proving the following proposition about threefolds obtained from surface snc pairs with a line bundle. Note that an snc pair with two line bundles $(C, D, L,M) $ given by a smooth curve $C$, an snc divisor $D$ on $C$, and two line bundles $L,M\in \text{Pic}(C)$ induces a threefold pair by applying Proposition 3.3 twice. The resulting threefold snc pair is 
\begin{equation*}
    (\mathbb{P}(\eta^\ast L\oplus\mathcal{O}), \mathbb{P}(M\oplus \mathcal{O})\cup (\eta\circ\widetilde{\eta})^{-1}(D)) 
\end{equation*}
where $\widetilde{\eta}: \mathbb{P}(\eta^\ast L\oplus\mathcal{O}) \rightarrow \mathbb{P}(M\oplus \mathcal{O})$ and $\eta: \mathbb{P}(M\oplus \mathcal{O})\rightarrow C $ are the bundle maps.
\begin{prop}

To prove any threefold snc pair $(\mathbb{P}(L\oplus \mathcal{O}), S\cup \eta^{-1}(D))$ induced by an snc surface pair with line bundle $(S,D,L)$ can be written as a rational sum of terms with either smooth boundary or no boundary, it suffices to prove that any threefold snc pair obtained from a curve snc pair with two line bundles $(C, D, L,M) $ can be written as a rational sum of terms with either smooth or no boundary.
\end{prop}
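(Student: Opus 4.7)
The plan is to induct on the number $k$ of irreducible components of the surface boundary $D = D_1 \cup \dots \cup D_k$, using deformation to the normal cone in $\Omega^{\text{Log}}_{1^1}$ to reduce $k$ and then pushing the resulting relation into $\Omega^{\text{Log}}_3$ via the morphism of Proposition 3.5. For the base case $k = 0$, the induced threefold $(\mathbb{P}(L \oplus \mathcal{O}), S)$ has boundary equal to the zero section $S$, which is smooth, so it is already a smooth pair.

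For the inductive step, I apply the deformation to the normal cone of $D_k \subset S$, namely $\mathcal{S} = \text{Bl}_{D_k \times 0}(S \times \mathbb{P}^1) \to \mathbb{P}^1$, equipped with $\mathcal{D}$ the strict transform of $(D_1 \cup \dots \cup D_{k-1}) \times \mathbb{P}^1$ and with $\mathcal{L}$ the pullback of $L$. Setting $N = N_{D_k/S}$ and $Z = D_k \cap (D_1 \cup \dots \cup D_{k-1})$, the central fiber $S \cup_{D_k} \mathbb{P}(N \oplus \mathcal{O})$ yields the relation in $\Omega^{\text{Log}}_{1^1}$
\begin{equation*}
(S, D_1 \cup \dots \cup D_{k-1}, L) = (S, D, L) + (\mathbb{P}(N \oplus \mathcal{O}), D_k \cup \zeta^{-1}(Z), \zeta^* L|_{D_k}),
\end{equation*}
where $\zeta \colon \mathbb{P}(N \oplus \mathcal{O}) \to D_k$ is the bundle projection. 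Applying the morphism of Proposition 3.5 term-by-term turns this into a relation in $\Omega^{\text{Log}}_3$ among the induced threefolds, and rearranging expresses the threefold of $(S, D, L)$ as the threefold of $(S, D_1 \cup \dots \cup D_{k-1}, L)$ minus the threefold coming from the extra summand.

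The crucial observation is that the surface triple $(\mathbb{P}(N \oplus \mathcal{O}), D_k \cup \zeta^{-1}(Z), \zeta^* L|_{D_k})$ is itself the image under Proposition 3.5 of the curve pair with two line bundles $(D_k, Z, L|_{D_k}, N)$. Consequently its further image in $\Omega^{\text{Log}}_3$ under Proposition 3.5 is a threefold of the form covered by the hypothesis of this proposition, so by hypothesis it admits a rational expression in smooth pairs and pairs with no boundary. The threefold of $(S, D_1 \cup \dots \cup D_{k-1}, L)$ admits such an expression by the inductive hypothesis, completing the induction.

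The main piece of verification that requires care is the claim that the family $\mathcal{S} \to \mathbb{P}^1$ together with $\mathcal{L}$ and $\mathcal{D}$ is a legitimate double point degeneration in the sense of $\Omega^{\text{Log}}_{1^1}$ and that the morphism of Proposition 3.5 sends it to the expected threefold degeneration. Concretely, one must verify transversality of the strict transform $\mathcal{D}$ with the exceptional component, record that the restrictions of $\mathcal{L}$ on the two components of the central fiber are $L$ and $\zeta^* L|_{D_k}$ respectively, and confirm compatibility with Definition 2.1. These are routine normal-bundle computations for the blow-up $\text{Bl}_{D_k \times 0}(S \times \mathbb{P}^1)$, but they should be carried out explicitly, as they are the only nontrivial check in the argument.
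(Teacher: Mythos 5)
Your proof is correct and takes essentially the same route as the paper's: induction on the number of components of $D$, deformation to the normal cone of $D_k$ with $\mathcal{D}$ the strict transform of the remaining components and $\mathcal{L}$ the pulled-back bundle, yielding the same relation, followed by the observation that the extra summand $(\mathbb{P}(N\oplus\mathcal{O}),\, D_k\cup\zeta^{-1}(Z),\, \zeta^{\ast}L|_{D_k})$ is the image of the curve tuple $(D_k, Z, L|_{D_k}, N)$ and hence is handled by the hypothesis. The only slip is a citation: the morphism that trades a line bundle for a $\mathbb{P}^1$-bundle boundary component is the one in Proposition 3.3, not Proposition 3.5.
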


\begin{proof}
As in the proof of proposition 3.5, we induct on the number $k$ of irreducible components for $D = D_1\cup \dots \cup D_k$. When $k = 0$, $(S,\emptyset, L)\mapsto (\mathbb{P}(L\oplus\mathcal{O}),S)$ indeed gives a smooth threefold pair. By induction, $(S,D_1\cup\dots\cup D_{k-1},L)$ gives a threefold pair that can be written as a rational sum of terms with either empty or smooth boundary. 

Consider the degeneration to the normal cone for $D_k$    
\begin{equation*}
    \mathcal{S} = \text{Bl}_{D_k\times 0} S\times \mathbb{P}^1\xrightarrow{\pi} \mathbb{P}^1
\end{equation*}
Take the line bundle $\mathcal{L}$ to be $\text{pr}^\ast_1\circ \text{bl}^\ast L$ where $\text{bl}: \mathcal{S}\rightarrow S\times\mathbb{P}^1$ is the blow up map and $\text{pr}_1; S\times\mathbb{P}^1\rightarrow S$ is the projection. For our family of divisors we take the strict transform $\mathcal{D}$ of  $ (D_1\cup\dots \cup D_{k-1})\times \mathbb{P}^1$. From the degeneration $\mathcal{D}\subset\mathcal{S}\xrightarrow{\pi}\mathbb{P}^1$ we extract the relation 
\begin{equation*}
    (S,D_1\cup\dots D_{k-1},L) = (S,D_1\cup\dots \cup D_{k-1}\cup D_k,L) + ((\mathbb{P}(M\oplus \mathcal{O}), D_k\cup \eta^{-1}(Z),\eta^\ast L)
\end{equation*}
where $M$ is the normal bundle of $D_k$ inside of $S$, $D_k$ sits inside of $\mathbb{P}(M\oplus \mathcal{O})$ as $\mathbb{P}(M)$ and $Z = (D_1\cap D_k) \cup \dots\cup (D_{k-1}\cap D_k) = Z_1\cup\dots Z_{k-1}$. By our inductive hypothesis it now suffices to prove that $((\mathbb{P}(M\oplus \mathcal{O}), D_k\cup \eta^{-1}(Z),\eta^\ast L)$ can be written as a sum of pairs with either empty or smooth boundary. Note that by Proposition 3.3 to degenerate $((\mathbb{P}(M\oplus \mathcal{O}), D_k\cup \eta^{-1}(Z),\eta^\ast L)$ it suffices to degenerate $(D_k, Z, L,M)$ where now $D_k$ is a smooth curve.

\end{proof}

We can now conclude with the following 
\begin{prop}
Given a smooth curve $C$ with an snc divisor $D$ and two line bundles $L,M\in\text{Pic}(C)$, we have an induced threefold snc pair 
\begin{equation*}
    (\mathbb{P}(\eta^\ast L\oplus\mathcal{O}), \mathbb{P}(M\oplus \mathcal{O})\cup (\eta\circ\widetilde{\eta})^{-1}(D)) 
\end{equation*}
where $\widetilde{\eta}: \mathbb{P}(\eta^\ast L\oplus\mathcal{O}) \rightarrow \mathbb{P}(M\oplus \mathcal{O})$ and $\eta: \mathbb{P}(M\oplus \mathcal{O})\rightarrow C $ are the bundle maps.

Then, any such threefold snc pair can be written as a rational sum of terms with either no boundary or smooth boundary.

\end{prop}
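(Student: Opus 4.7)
The plan is to induct on $k=|D|$, reducing in $\Omega^{\text{Log}}_{1^2}$ to the trivial-boundary case $D=\emptyset$ by repeated degeneration to normal cones of points of $D$.

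The base case $k=0$ is immediate from the statement: the induced threefold $(\mathbb{P}(\eta^{\ast}L\oplus\mathcal{O}), \mathbb{P}(M\oplus\mathcal{O}))$ has as its only boundary component the $\mathbb{P}^1$-bundle $\mathbb{P}(M\oplus\mathcal{O})$ over $C$, which is smooth because $C$ is smooth and $M$ is a line bundle. So this case already sits among the pairs with smooth boundary.

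For the inductive step, fix a point $p\in D$ and perform the degeneration to the normal cone of $p\subset C$ at the level of $\Omega^{\text{Log}}_{1^2}$. Take the total space to be $\mathcal{C}=\text{Bl}_{p\times 0}(C\times\mathbb{P}^1)\to\mathbb{P}^1$, pull $L$ and $M$ back from $C$ via the blow-up composed with projection, and let the family of snc divisors be the strict transform of $(D\setminus\{p\})\times\mathbb{P}^1$. The central fiber is $C\cup_{p}R$ with $R=\mathbb{P}(N_{p/C}\oplus\mathcal{O})\cong\mathbb{P}^1$. Because $L$ and $M$ restrict to trivial bundles on $R$ (they come from the single point $p$) and the strict-transform divisor is disjoint from $R$, the resulting double point relation in $\Omega^{\text{Log}}_{1^2}$ is
\begin{equation*}
    (C, D\setminus\{p\}, L, M) = (C, D, L, M) + (\mathbb{P}^1,\text{pt},\mathcal{O},\mathcal{O}).
\end{equation*}
Iterating this peeling $k$ times, and invoking the analogue of Example 2.3 in $\Omega^{\text{Log}}_{1^2}$ (obtained by extending the trivial line bundles across the usual normal-cone degeneration of $\mathbb{P}^1$), which yields $(\mathbb{P}^1,\text{pt},\mathcal{O},\mathcal{O})=\tfrac{1}{2}(\mathbb{P}^1,\emptyset,\mathcal{O},\mathcal{O})$, one obtains
\begin{equation*}
    (C,D,L,M) = (C,\emptyset,L,M) - \tfrac{k}{2}\,(\mathbb{P}^1,\emptyset,\mathcal{O},\mathcal{O})
\end{equation*}
in $\Omega^{\text{Log}}_{1^2}$.

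Finally, apply the morphism $\Omega^{\text{Log}}_{1^2}\to\Omega^{\text{Log}}_3$ built from two applications of Proposition 3.3 to this identity. Both $(C,\emptyset,L,M)$ and $(\mathbb{P}^1,\emptyset,\mathcal{O},\mathcal{O})$ have empty curve-level divisor, so by the base case each of their images is a smooth-boundary threefold; consequently, the induced threefold from $(C,D,L,M)$ is written as a rational combination of two smooth-boundary pairs, completing the induction. The main technical obstacle is verifying that the two degenerations used—the peeling at the curve level, and Example 2.3 for $\mathbb{P}^1$—lift cleanly to the line-bundle setting, i.e., that their total spaces are smooth and support the pulled-back trivial bundles consistently; this is routine because the blow-ups are of smooth codimension-two loci and pullbacks of trivial bundles remain trivial.
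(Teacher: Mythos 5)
Your inductive step is essentially the paper's own argument: the same degeneration to the normal cone of a point, the same peeled-off term $(\mathbb{P}^1,\mathrm{pt},\mathcal{O},\mathcal{O})$, and disposing of it via the analogue of $(\mathbb{P}^1,0)=\tfrac12(\mathbb{P}^1,\emptyset)$ with trivial bundles is fine. The genuine gap is your base case, which you declare ``immediate'' and which is where all the real content of this proposition lives. You read off from the displayed formula that for $D=\emptyset$ the induced threefold is $(\mathbb{P}(\eta^\ast L\oplus\mathcal{O}),\mathbb{P}(M\oplus\mathcal{O}))$ with smooth boundary, but that formula is missing a boundary component: applying Proposition 3.3 twice, the boundary is $\mathbb{P}(M\oplus\mathcal{O})\cup\widetilde{\eta}^{-1}(\mathbb{P}(M))\cup(\eta\circ\widetilde{\eta})^{-1}(D)$, since the first application already produces the section $C=\mathbb{P}(M)$ as a boundary component of the surface pair, and the second application takes $\widetilde{\eta}^{-1}$ of the entire surface boundary. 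So even with $D=\emptyset$ the induced threefold has two boundary components meeting along a copy of $C$ --- an snc, not smooth, divisor. The paper's own computations inside this proof confirm this: $(\mathbb{P}^1,\emptyset,\mathcal{O}(1),\mathcal{O})$ induces $(\mathbb{F}_1\times\mathbb{P}^1,\ \mathbb{F}_1\times 0\cup\mathbb{P}^1\times\mathbb{P}^1)$, with two components, not $(\mathbb{F}_1\times\mathbb{P}^1,\mathbb{P}^1\times\mathbb{P}^1)$ alone.

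Consequently your reduction to $(C,\emptyset,L,M)$ does not finish the proof: for a general curve $C$ with arbitrary $L,M$, the resulting two-component pair does not visibly split as a product or as a rational sum of smooth pairs, and your argument supplies no mechanism to handle it. This is exactly where the paper invokes Lee--Pandharipande: $\Omega_{1,1^2}$ has $\mathbb{Q}$-basis $(\mathbb{P}^1,\mathcal{O}(1),\mathcal{O})$, $(\mathbb{P}^1,\mathcal{O},\mathcal{O}(1))$, $(\mathbb{P}^1,\mathcal{O},\mathcal{O})$, so $(C,\emptyset,L,M)$ is a rational combination of these three explicit tuples, and for each of them the induced threefold is computed by hand to be a rational multiple of a smooth or boundaryless pair, using the product structure together with $(\mathbb{P}^1,0)=\tfrac12(\mathbb{P}^1,\emptyset)$ --- e.g.\ $(\mathbb{F}_1\times\mathbb{P}^1,\ \mathbb{F}_1\times 0\cup\mathbb{P}^1\times\mathbb{P}^1)=(\mathbb{F}_1,\mathbb{P}^1)\cdot(\mathbb{P}^1,0)=\tfrac12(\mathbb{F}_1\times\mathbb{P}^1,\mathbb{P}^1\times\mathbb{P}^1)$. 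Your proof never uses this cobordism input, and without it (or some substitute) the base case, and hence the induction, does not close.
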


\begin{proof}
We will prove the theorem by induction on the number of points $k$ in the snc divisor $D = p_1 + \dots + p_k$ on the smooth curve $C$. We begin with the case $k=0$. So we have $(C,\emptyset, L,M)\in \omega^{\text{Log}}_{1,1^2}$ where $C$ is a smooth curve and $L,M$ are line bundles on $C$. We now use the morphism $\omega_{1,1^2}\rightarrow \omega^{\text{Log}}_{1,1^2}$ from the Levine-Pandharipande ring to our ring. In particular, it is proven by Lee-Pandharipande in \cite{LeeP} that $\omega_{1,1^2}$ has a $\mathbb{Q}$ basis given by $(\mathbb{P}^1,\mathcal{O}(1),\mathcal{O})$, $(\mathbb{P}^1, \mathcal{O},\mathcal{O}(1))$ and $(\mathbb{P}^1,\mathcal{O},\mathcal{O})$. The element $(\mathbb{P}^1,\emptyset,\mathcal{O}(1),\mathcal{O})$ induces the smooth threefold pair
\begin{equation*}
    (\mathbb{F}_1\times\mathbb{P}^1, \mathbb{F}_1\times 0\cup \mathbb{P}^1\times \mathbb{P}^1) = (\mathbb{F}_1,\mathbb{P}^1)\cdot (\mathbb{P}^1,0) = \frac{1}{2}(\mathbb{F}_1\times\mathbb{P}^1,\mathbb{P}^1\times\mathbb{P}^1)
\end{equation*}
where we used the fact that $(\mathbb{P}^1,0) = \frac{1}{2}(\mathbb{P}^1,\emptyset)$. The element $(\mathbb{P}^1, \emptyset, \mathcal{O},\mathcal{O}(1))$ induces the same smooth threefold pair, and the element $(\mathbb{P}^1,\emptyset,\mathcal{O},\mathcal{O})$ induces the smooth threefold pair
\begin{equation*}
    ((\mathbb{P}^1)^2\times \mathbb{P}^1, (\mathbb{P}^1\times0)\times\mathbb{P}^1\cup (\mathbb{P}^1)^2\times 0) = (\mathbb{P}^1,0)^3 = \frac{1}{8} ((\mathbb{P}^1)^3,\emptyset)
\end{equation*}
Using the morphism $\omega_{1,1^2}\rightarrow \omega^{\text{Log}}_{1,1^2}$ we know then that $(C,\emptyset, L,M)$ can be written as a $\mathbb{Q}$ linear combination of $(\mathbb{P}^1,\mathcal{O}(1),\mathcal{O})$, $(\mathbb{P}^1, \mathcal{O},\mathcal{O}(1))$, and so we conclude by the above discussion that $(C,\empty,L,M)$ can be written as a sum of pairs with either empty or smooth boundary.

For the inductive step, write $D = p_1 + \dots + p_k$, and consider the degeneration to the normal cone for $p_k$
\begin{equation*}
    \mathcal{C} = \text{bl}_{p_k\times 0}(C\times\mathbb{P}^1)\xrightarrow{\text{bl}} C\times\mathbb{P}^1
\end{equation*}
Let $\text{pr}_1: C\times\mathbb{P}^1\rightarrow C$ be the projection and let $\mathcal{L}= (\text{pr}_1\circ \text{bl})^\ast L$ and $\mathcal{M} =  (\text{pr}_1\circ \text{bl})^\ast M$. For the family of divisors let $\mathcal{D}\subset \mathcal{C}$ be the strict transform of ${p_1}\times\mathbb{P}^1\cup\dots \cup {p_{k-1}}\times\mathbb{P}^1$. The relation we extract from $\mathcal{D}\subset\mathcal{C}\xrightarrow{\pi}\mathbb{P}^1$ is 
\begin{equation*}
    (C,p_1+\dots + p_{k-1}, L,M) = (C_,p_1\dots + p_{k}, L,M) + (\mathbb{P}^1, p_k, \mathcal{O},\mathcal{O})
\end{equation*}
By induction we know that the threefold induced by $(C,p_1+\dots + p_{k-1}, L,M)$ can be written as a $\mathbb{Q}$ sum of terms with either no boundary or smooth boundary. We are now done, because $(\mathbb{P}^1, p_k, \mathcal{O},\mathcal{O})$ induces the following pair with no boundary
\begin{equation*}
    ((\mathbb{P}^1)^3, 0\times(\mathbb{P}^1)^2\cup (\mathbb{P}^1)^2\times 0\cup \mathbb{P}^1\times 0\times\mathbb{P}^1) = (\mathbb{P}^1,0)^3 = \frac{1}{8}((\mathbb{P}^1)^3,\emptyset)
\end{equation*}

\end{proof}

Combining Propositions 3.5, 3.6, 3.7, we have proven Theorem 3.4.

Note that by work of Lee-Pandharipande \cite{LeeP} the algebraic cobordism ring $\omega_{2,1^1}$ of surfaces and line bundles $(S,L)$ has a basis over $\mathbb{Q}$ given by $(\mathbb{P}^2, \mathcal{O})$, $(\mathbb{P}^2, \mathcal{O}(1))$, $(\mathbb{P}^1\times\mathbb{P}^1, \mathcal{O})$, and $(\mathbb{P}^1\times\mathbb{P}^1, \mathcal{O}(1,0))$. Applying the morphism of Proposition 3.3 to get elements of $\omega^{\text{Log}}_3$, the above elements of $\omega_{2,1^1}$ induce the following threefold snc pairs:
\begin{align*}
    &(\mathbb{P}^2, \mathcal{O})\mapsto (\mathbb{P}^2\times\mathbb{P}^1, \mathbb{P}^2\times 0) = (\mathbb{P}^2,\emptyset)\cdot (\mathbb{P}^1,0) = \frac{1}{2}(\mathbb{P}^2\times\mathbb{P}^1,\emptyset)\\
    &(\mathbb{P}^2, \mathcal{O}(1))\mapsto (\mathbb{P}(\mathcal{O}_{\mathbb{P}^2}(1)\oplus\mathcal{O}),\mathbb{P}^2)\\
    &(\mathbb{P}^1\times\mathbb{P}^1, \mathcal{O})\mapsto ((\mathbb{P}^1)^3, \mathbb{P}^1\times\mathbb{P}^1\times0) = (\mathbb{P}^1\times\mathbb{P}^1,\emptyset)\cdot (\mathbb{P}^1,0) = \frac{1}{2}((\mathbb{P}^1)^3,\emptyset0\\
    &(\mathbb{P}^1\times\mathbb{P}^1, \mathcal{O}(1,0))\mapsto (\mathbb{F}_1\times\mathbb{P}^1, \mathbb{P}^1\times\mathbb{P}^1)
\end{align*}
Note that any smooth threefold pair $(X,D)$ can be written as $(X,\emptyset) - (D,N_{D/X})$ by performing deformation to the normal cone for $D\subset X$. In particular, any smooth threefold pair can be written as a rational sum of the images of the generators of $\omega_3\rightarrow \omega^{\text{Log}}_3$, i.e. terms without boundary, and the images of the generators of the composition $\omega_{2,1^1}\rightarrow \omega^{\text{Log}}_{2,1^1}\rightarrow \omega^{\text{Log}}_{3}$. The ring $\omega_3$ has a basis over $\mathbb{Q}$ given by $(\mathbb{P}^3)$, $(\mathbb{P}^2\times\mathbb{P}^1), (\mathbb{P}^1\times\mathbb{P}^1\times\mathbb{P}^1)$, so in conjunction with the above discussion and using Theorem 3.4 we conclude

\begin{theo}
$\omega^{\text{Log}}_3$ is generated as a vector space over $\mathbb{Q}$ be the elements $(\mathbb{P}^3,\emptyset)$, $(\mathbb{P}^2\times\mathbb{P}^1,\emptyset), (\mathbb{P}^1\times\mathbb{P}^1\times\mathbb{P}^1,\emptyset)$, $(\mathbb{P}(\mathcal{O}_{\mathbb{P}^2}(1)\oplus\mathcal{O}),\mathbb{P}^2)$, and $(\mathbb{F}_1\times\mathbb{P}^1, \mathbb{P}^1\times\mathbb{P}^1)$
\end{theo}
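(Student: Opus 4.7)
The plan is to combine Theorem 3.4, which reduces the generation problem to smooth pairs and pairs with empty boundary, with the Levine-Pandharipande computations of $\Omega_3$ and the Lee-Pandharipande computation of $\Omega_{2,1^1}$. Concretely, once Theorem 3.4 is in hand, every class in $\Omega^{\text{Log}}_3$ is a $\mathbb{Q}$-linear combination of classes of the form $(X,\emptyset)$ and $(X,D)$ with $D$ smooth, so it suffices to show each of these two families lies in the $\mathbb{Q}$-span of the five listed classes.

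For the empty-boundary case, I invoke the ring homomorphism $\Omega \to \Omega^{\text{Log}}$ of Proposition 3.2. Since Levine-Pandharipande show that $\Omega_3$ is spanned over $\mathbb{Q}$ by $(\mathbb{P}^3)$, $(\mathbb{P}^2\times\mathbb{P}^1)$, $(\mathbb{P}^1\times\mathbb{P}^1\times\mathbb{P}^1)$, the image in $\Omega^{\text{Log}}_3$ of any empty-boundary pair is a rational combination of the first three generators on our list, handling this case.

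For the smooth-boundary case, I apply deformation to the normal cone to a smooth divisor $D \subset X$ (Example 2.3) to write $(X,D) = (X,\emptyset) - (\mathbb{P}(N_{D/X}\oplus \mathcal{O}), D)$, where the second term is the image of $(D, N_{D/X}) \in \Omega^{\text{Log}}_{2,1^1}$ under the projectivization morphism of Proposition 3.3. The first term was already covered, so it remains to understand the image of the composition $\Omega_{2,1^1} \to \Omega^{\text{Log}}_{2,1^1} \to \Omega^{\text{Log}}_3$. By Lee-Pandharipande, $\Omega_{2,1^1}$ is spanned over $\mathbb{Q}$ by $(\mathbb{P}^2,\mathcal{O})$, $(\mathbb{P}^2,\mathcal{O}(1))$, $(\mathbb{P}^1\times\mathbb{P}^1,\mathcal{O})$, and $(\mathbb{P}^1\times\mathbb{P}^1,\mathcal{O}(1,0))$, and I just need to compute the threefold pair each of these produces. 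For the two pairs involving the trivial bundle, the projectivization is a product with $\mathbb{P}^1$ and the boundary is a smooth fiber, which using $(\mathbb{P}^1,0) = \tfrac{1}{2}(\mathbb{P}^1,\emptyset)$ from Example 2.2 reduces to a rational multiple of an empty-boundary generator already on our list. The remaining two pairs give exactly $(\mathbb{P}(\mathcal{O}_{\mathbb{P}^2}(1)\oplus \mathcal{O}), \mathbb{P}^2)$ and $(\mathbb{F}_1\times\mathbb{P}^1, \mathbb{P}^1\times\mathbb{P}^1)$.

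The main technical input is Theorem 3.4, which the preceding propositions have already established; relative to that, the argument above is mostly bookkeeping. The only real subtlety I anticipate is ensuring the compositions of morphisms $\Omega_{2,1^1} \to \Omega^{\text{Log}}_{2,1^1} \to \Omega^{\text{Log}}_3$ are well-defined ring maps that send the Lee-Pandharipande generators to the claimed classes; since each step is linear and tracks line bundles functorially, this reduces to the explicit image computations listed just before the theorem statement. Assembling these three ingredients — Theorem 3.4, the map from $\Omega_3$, and the computation of images from the Lee-Pandharipande basis of $\Omega_{2,1^1}$ — yields the five-element generating set and hence Theorem 3.8.
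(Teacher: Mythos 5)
Your proposal is correct and follows essentially the same route as the paper: Theorem 3.4 to reduce to smooth and empty-boundary pairs, the morphism $\Omega \to \Omega^{\text{Log}}$ of Proposition 3.2 with the Levine--Pandharipande basis of $\Omega_3$ for the empty-boundary case, and the normal-cone decomposition $(X,D) = (X,\emptyset) - (D, N_{D/X})$ combined with the images under Proposition 3.3 of the Lee--Pandharipande basis of $\Omega_{2,1^1}$ for the smooth case. Your image computations, including the reduction of the trivial-bundle generators via $(\mathbb{P}^1,0) = \tfrac{1}{2}(\mathbb{P}^1,\emptyset)$, match the paper's exactly.
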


\begin{rem}
The element $(\mathbb{P}(\mathcal{O}_{\mathbb{P}^2}(1)\oplus\mathcal{O}),\mathbb{P}^2)$ in the above list can be replaced by $(\mathbb{P}^3,\mathbb{P}^2)$. This can be seen by performing degeneration to the normal cone for a hyperplane $\mathbb{P}^2\subset \mathbb{P}^3$.
\end{rem}

\section{Proof of the zero dimensional log DT generating funfction}

Having generators for $\omega^{\text{Log}}_3$ in hand, we are nearly able to prove Conjecture 1.3. The following gives us invariants for logarithmic cobordism
\begin{theo}
For any partition $\lambda$ of $n$, there is a morphism $c_\lambda: \omega^{\text{Log}}_n\rightarrow \mathbb{Q}$ of $\mathbb{Q}$ vector spaces $\omega^{\text{Log}}_n\rightarrow \mathbb{Q}$ defined by 
\begin{equation*}
    (X,D)\mapsto c^{\text{log}}_\lambda(X,D) = \prod_i c_{\lambda_i} (T_{(X,D)})
\end{equation*}
where $T_{(X,D)}$ is the logarithmic tangent bundle with respect to the snc divisor $D\subset X$ and $\lambda_i$ are the parts of $\lambda$
\end{theo}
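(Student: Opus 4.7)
The plan is to verify that $c^{\text{log}}_\lambda$, which is obviously $\mathbb{Q}$-linear on the free vector space $\mathcal{M}$, descends to the quotient $\Omega^{\text{Log}}_n = \mathcal{M}/\mathcal{R}$. This reduces to showing that for every double point degeneration of Definition 2.1, with smooth generic fiber $(X,D)$ and central fiber $(X_1, D_1+E) \cup_E (X_2, D_2+E)$, one has
\[
\int_X c_\lambda\bigl(T^{\log}_{(X,D)}\bigr) \; = \; \int_{X_1} c_\lambda\bigl(T^{\log}_{(X_1,D_1+E)}\bigr) + \int_{X_2} c_\lambda\bigl(T^{\log}_{(X_2,D_2+E)}\bigr).
\]

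First I would equip $\mathcal{X}$ with the divisorial log structure coming from the snc divisor $\mathcal{D} + X_1 + X_2$, and $\mathbb{P}^1$ with the log structure from $\{0\}$. The smoothness of $\mathcal{X}$, the transversality of $X_1 \cap X_2 = E$, the snc condition on $\mathcal{D}$, and the transversality of $\mathcal{D}$ with $E$ (all built into Definition 2.1) guarantee that $\pi: \mathcal{X} \to \mathbb{P}^1$ is log smooth: étale-locally near $E$ the map reads $(z_1,z_2,\ldots) \mapsto z_1 z_2$ with $\mathcal{D}$ a further transverse snc divisor. Consequently the relative log tangent sheaf $T^{\log}_{\mathcal{X}/\mathbb{P}^1}$ is locally free of rank $n$ and fits in
\[
0 \to T^{\log}_{\mathcal{X}/\mathbb{P}^1} \to T^{\log}_{\mathcal{X}} \to \pi^{\ast} T^{\log}_{\mathbb{P}^1} \to 0.
\]

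The second step is to identify the restriction of $T^{\log}_{\mathcal{X}/\mathbb{P}^1}$ to each geometric fiber with the appropriate intrinsic log tangent bundle. Away from $0 \in \mathbb{P}^1$ the only log boundary components of $\mathcal{X}$ meeting the fiber $X_t$ are the components of $\mathcal{D}$, yielding $T^{\log}_{\mathcal{X}/\mathbb{P}^1}\big|_{X_t} = T^{\log}_{(X_t, D_t)}$. For the central fiber the essential point is that on the component $X_i$ the ambient log structure remembers both the transverse boundary $D_i = X_i \cap \mathcal{D}$ and the other component $E = X_1 \cap X_2$; unwinding the local model $z_1 z_2 = t$ one verifies
\[
T^{\log}_{\mathcal{X}/\mathbb{P}^1} \Big|_{X_i} \; = \; T^{\log}_{(X_i,\, D_i + E)},
\]
so that the relative log tangent bundle restricts to the \emph{full} log tangent bundle on $X_i$ with boundary augmented by $E$. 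This restriction identification is the heart of the argument and the step I expect to take the most care; it is a standard fact in logarithmic geometry but I would write out the node chart explicitly to confirm the extra logarithmic pole along $E$.

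The final step invokes topological invariance of Chern numbers in a proper flat family: the integer $\int_{\pi^{-1}(t)} c_\lambda\bigl(T^{\log}_{\mathcal{X}/\mathbb{P}^1}|_{\pi^{-1}(t)}\bigr)$ is locally constant in $t$, and since $[X_1 \cup_E X_2] = [X_1] + [X_2]$ as a cycle on $\mathcal{X}$, evaluating at a general $t$ and at $t = 0$ and combining with the previous identifications gives exactly the double point relation. The main obstacle is therefore the log-geometric identification of the restricted relative log tangent bundle on the two components of the nodal fiber; once that is in place, the theorem is a direct application of the constancy of Chern numbers in a flat family of rank-$n$ bundles on a family of $n$-dimensional proper schemes.
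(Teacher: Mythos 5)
Your proof is correct, but it takes a genuinely different route from the paper's at the key step. The paper never puts the central fiber into the log structure: it works with the absolute log tangent bundle $T^{\text{log}}_{\mathcal{X}}$ taken with respect to the boundary $\mathcal{D}$ alone, caps $c_\lambda(T^{\text{log}}_{\mathcal{X}})$ with $[X] = [A] + [B]$ in $\mathrm{CH}(\mathcal{X})$ (this part coincides with your cycle-theoretic step), but the restrictions it obtains are the log tangent bundles of $(A, D_A)$ and $(B, D_B)$ \emph{without} the divisor $E$, so a second, corrective step is needed: the residue exact sequences
\begin{equation*}
0 \to T^{\text{log}}_{(A, D_A + E)} \to T^{\text{log}}_{(A, D_A)} \to N_{E\subset A} \to 0, \qquad
0 \to T^{\text{log}}_{(B, D_B + E)} \to T^{\text{log}}_{(B, D_B)} \to N_{E\subset B} \to 0,
\end{equation*}
together with $N_{E\subset A} \cong N^{\vee}_{E\subset B}$, are used to show that the discrepancy terms from the two components cancel. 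Your choice to equip $\mathcal{X}$ with the log structure from $\mathcal{D} + X_1 + X_2$ and to use the relative log tangent bundle of the log smooth morphism $\pi$ makes the restriction to each component of the nodal fiber come out as $T^{\text{log}}_{(X_i, D_i + E)}$ on the nose, so no cancellation is needed: the pole along $E$ is built in from the start. Each approach buys something: the paper's argument is more elementary (only classical bundles and exact sequences, no log smooth morphisms), but its bookkeeping is delicate --- as written it glosses the fact that restricting the rank-$(n+1)$ bundle $T^{\text{log}}_{\mathcal{X}}$ to $A$ introduces a twist by $N_{A/\mathcal{X}} \cong \mathcal{O}_A(-E)$ (trivial for the smooth fiber $X$ but not for $A$ or $B$), and the final cancellation is left unfinished. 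Your relative-log-tangent formulation absorbs both issues automatically and transfers to other log smooth degenerations; its one real obligation, which you correctly flag, is the local-model verification of log smoothness and of the fiberwise restriction, and the transversality hypotheses of Definition 2.1 are exactly what make that chart computation go through.
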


\begin{proof}
Before beginning the proof we briefly recall one property about the log tangent bundle that we will use. Suppose $\partial X =  D_1\cup\dots\cup D_k \subset X$ is an snc divisor in a smooth variety $X$ where each $D_i$ is smooth. Let $Z\subset X$ be a smooth subvariety meeting $\partial X$ transversally, then the log tangent bundle $T^\text{log}_X$ of $X$ with respect to $D$ restricted to $Z$ is the log tangent bundle $T^{\text{log}}_{Z}$ of $Z$ with respect to the snc divisor $\partial Z = \partial X\cap Z = (D_1\cap Z)\cup\dots\cup (D_k\cap Z)$.

Now, suppose we have a double point degeneration of snc pairs $\mathcal{D}\subset \mathcal{X}\xrightarrow{\pi} \mathbb{P}^1$, with $\pi^{-1}(0) = A\cup_E B$ and $X = \pi^{-1}(\xi)$ a smooth fiber. Let $\partial X = X\cap \mathcal{D}$, $\partial A = (\mathcal{D}\cap A)\cup E = D_A\cup E$, and $\partial B = (\mathcal{D}\cap B)\cup E = D_B\cup E$. The double point relation extracted from the degeneration is then 
\begin{equation*}
    (X,\partial X) = (A,\partial A) + (B,\partial B)
\end{equation*}
The log tangent bundle $T^{\text{log}}_\mathcal{X}$ of $\mathcal{X}$ with respect to $\partial \mathcal{X} = \mathcal{D}$ restricted to $X$ is the log tangent bundle $T^{\text{log}}_X$ of $X$ with respect to $\partial X$. Similarly, the log tangent bundle $T^{\text{log}}_\mathcal{X}$ restricted to $A$ and $B$ gives $T^{\text{log}}_{(A,D_A)}$, i.e. the log tangent bundle of $A$ w.r.t. $D_A$, and the log tangent bundle $T^{\text{log}}_B$ of $B$ with respect to $D_B$. Let $\lambda$ be a partition of $n = \dim X$, then 
\begin{equation*}
    c_\lambda (T^{\text{log}}_X) = c_\lambda(T^{\text{log}}_\mathcal{X}|_X) = c_\lambda(T^{\text{log}}_\mathcal{X})\cdot [X]\in \text{CH}_0(\mathcal{X})
\end{equation*}
Using $[X] = [\pi^{-1}(\xi)] = [\pi^{-1}(0)] = [A] + [B]$ in $\text{CH}(\mathcal{X})$, we conclude that 
\begin{align*}
    c_\lambda (T^{\text{log}}_X) &= c_\lambda(T^{\text{log}}_\mathcal{X})\cdot [X] \\
    &= c_\lambda(T^{\text{log}}_\mathcal{X})\cdot ([A] + [B]) \\
    &= c_\lambda(T^{\text{log}}_\mathcal{X}|_A) + c_\lambda(T^{\text{log}}_\mathcal{X}|_B)  \\
    &= c_\lambda (T^{\text{log}}_{(A,D_A)} )+ c_\lambda (T^{\text{log}}_{(B,D_B)})
\end{align*}
It then remains to observe that 
\begin{equation*}
    c_\lambda (T^{\text{log}}_{(A,D_A)} )+ c_\lambda (T^{\text{log}}_{(B,D_B)}) = c_\lambda(T^{\text{log}}_{(A,D_A+E)}) + c_\lambda(T^{\text{log}}_{(B,D_B+E)})
\end{equation*}
This can be seen by combining the short exact sequences 
\begin{align*}
    &0\rightarrow T^{\text{log}}_{(A,D_A+E)}\rightarrow T^{\text{log}}_{(A,D_A)}\rightarrow N_{E\subset A}\rightarrow 0\\
    &0\rightarrow T^{\text{log}}_{(B,D_B+E)}\rightarrow T^{\text{log}}_{(B,D_B)}\rightarrow N_{E\subset B}\rightarrow 0
\end{align*}
and also using that since $A$ and $B$ meet transversally along $E$ then $N_{E\subset A} = N^{\vee}_{E\subset B}$, so 

\end{proof}

We now observe that for $(X,D)$ an snc pair, the quantity $\alpha(X,D) = c_3(T^{\text{log}}_X\otimes K^{\text{log}}_X)$ can be expressed as an integer sum of the log chern invariants $c^{\text{log}}_\lambda(\cdot,\cdot)$ for $\lambda$ a partition of 3. Hence $\alpha(X,D) = c_3(T^{\text{log}}_X\otimes K^{\text{log}}_X)$ is an invariant and yields a morphism of $\mathbb{Q}$ vector spaces $\alpha(\cdot,\cdot): \omega^{\text{Log}}_3\rightarrow\mathbb{Q}$. We can now prove our main theorem
\begin{theo}
For $(X,D)$ an snc threefold pair, we have an equality of generating functions
\begin{equation*}
    Z(X,D) = M(q)^{\int_X c_3(T^\text{log}_X\otimes K^\text{log}_X)}
\end{equation*}
\end{theo}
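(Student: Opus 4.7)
The plan is to combine three ingredients already assembled in the excerpt: the generating set for $\Omega^{\text{Log}}_3$ from Theorem 3.8, the fact that $Z(\cdot,\cdot)$ descends to a ring homomorphism $F:\Omega^{\text{Log}}_3\to\mathbb{Q}[[q]]$ (Proposition 2.3), and the fact that $\alpha(X,D)=\int_X c_3(T^{\log}_X\otimes K^{\log}_X)$ descends to a $\mathbb{Q}$-linear map $\alpha:\Omega^{\text{Log}}_3\to\mathbb{Q}$ (Theorem 4.1 together with the remark that $\alpha$ is an integer linear combination of the $c^{\log}_\lambda$ for $\lambda\vdash 3$). Granted these, I would argue as follows: both $\log Z(X,D)\in q\mathbb{Q}[[q]]$ and $\alpha(X,D)\log M(q)$ are $\mathbb{Q}$-linear functions of the class $[X,D]\in\Omega^{\text{Log}}_3$, so the identity $Z(X,D)=M(q)^{\alpha(X,D)}$ (equivalently $\log Z=\alpha\cdot\log M$) need only be verified on a $\mathbb{Q}$-generating set.

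Next I would check the conjecture on each of the five generators listed in Theorem 3.8. For the three boundary-free generators $(\mathbb{P}^3,\emptyset)$, $(\mathbb{P}^2\times\mathbb{P}^1,\emptyset)$, $(\mathbb{P}^1\times\mathbb{P}^1\times\mathbb{P}^1,\emptyset)$ the claim is precisely Theorem 1.1 of Levine--Pandharipande and Li, since in those cases $T^{\log}_X=T_X$ and $K^{\log}_X=K_X$. For the two remaining generators $(\mathbb{P}^3,\mathbb{P}^2)$ (using the replacement allowed by Remark 3.9) and $(\mathbb{F}_1\times\mathbb{P}^1,\mathbb{P}^1\times\mathbb{P}^1)$, the boundary divisor is smooth, so the statement is exactly the content of Theorem 1.2. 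Thus at each generator both sides of the claimed equality are the same power series in $q$.

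Now I would spell out the extraction step. By Theorem 3.8, for an arbitrary snc threefold pair $(X,D)$ there exist rationals $r_i\in\mathbb{Q}$ and generators $(Y_i,E_i)$ such that $[X,D]=\sum_i r_i[Y_i,E_i]$ in $\Omega^{\text{Log}}_3$. Clearing denominators we obtain an integer relation $a[X,D]=\sum_i a_i[Y_i,E_i]$. Applying the ring homomorphism $F$ gives $Z(X,D)^a=\prod_i Z(Y_i,E_i)^{a_i}$, while applying $\alpha$ gives $a\cdot\alpha(X,D)=\sum_i a_i\alpha(Y_i,E_i)$. Substituting the conjecture at each generator into the first identity yields
\begin{equation*}
Z(X,D)^a=\prod_i M(q)^{a_i\alpha(Y_i,E_i)}=M(q)^{\sum_i a_i\alpha(Y_i,E_i)}=M(q)^{a\cdot\alpha(X,D)}.
\end{equation*}
Since $Z(X,D)$ and $M(q)^{\alpha(X,D)}$ both have constant term $1$, taking the unique $a$-th root in $1+q\mathbb{Q}[[q]]$ gives $Z(X,D)=M(q)^{\alpha(X,D)}$, as desired.

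The hard part of the overall argument is not this final assembly but the inputs it rests on, which have already been secured earlier in the paper: namely (i) the ring homomorphism property of $F$, whose well-definedness relies on the degeneration formula of \cite{MR23}, and (ii) the generation result Theorem 3.8, proved by the recursive trade between boundary components and line bundles using Propositions 3.2--3.7 together with the Lee--Pandharipande basis for $\Omega_{1,1^2}$ and $\Omega_{2,1^1}$. The only genuinely new verification needed at this stage is a sanity check that the five generators really do have $Z(X,D)=M(q)^{\alpha(X,D)}$; this is where care is required, since one must confirm that the known theorems of Levine--Pandharipande apply in the correct normalization (so that the sign convention $M(q)$ versus $M(-q)$ used in the abstract and in the displayed equation in Section 3 is consistent with Theorem 1.2 as stated).
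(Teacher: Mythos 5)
Your proposal is correct and follows essentially the same route as the paper's own proof: decompose $[X,D]$ in $\Omega^{\text{Log}}_3$ via the generating set of Theorem 3.8, clear denominators, apply the ring homomorphism $F$ and the invariant $\alpha$, invoke Levine--Pandharipande for the boundary-free and smooth-boundary generators, and extract the unique $a$-th root in $1+q\mathbb{Q}[[q]]$. Your closing caution about the $M(q)$ versus $M(-q)$ normalization is well placed, since the paper itself states the theorem with $M(q)$ but concludes its proof with $M(-q)$, an inconsistency your verification-on-generators step would force one to resolve.
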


\begin{proof}

By Theorem 3.1 and Theorem 3.8, we can find integers $r,a_i$ such that the element $(X,D)\in \omega^{\text{Log}}_3$ in the logarithmic cobordism ring satisfies
\begin{equation*}
r(X,D) = a_1(\mathbb{P}^3,\emptyset) +a_2 (\mathbb{P}^2\times\mathbb{P}^1,\emptyset) + a_3(\mathbb{P}^1\times\mathbb{P}^1\times\mathbb{P}^1,\emptyset) + a_4(\mathbb{P}^3,\mathbb{P}^2) + a_5(\mathbb{F}_1\times\mathbb{P}^1, \mathbb{P}^1\times\mathbb{P}^1)
\end{equation*}
By Proposition 2.1, we have a ring homomorphism $\omega^{\text{Log}}_3\rightarrow \mathbb{Q}[[x]]$ sending $(X,D)$ to the zero dimensional log DT generating function $\text{Z}(X,D)$. Using the above equality we now have 
\begin{equation*}
    \text{Z}(X,D)^r = \text{Z}(\mathbb{P}^3,\emptyset)^{a_1} \cdot \text{Z}(\mathbb{P}^2\times\mathbb{P}^1,\emptyset)\cdot  \text{Z}(\mathbb{P}^1\times\mathbb{P}^1\times\mathbb{P}^1,\emptyset)^{a_3} \cdot \text{Z}(\mathbb{P}^3,\mathbb{P}^2)^{a_4} \cdot  \text{Z}(\mathbb{F}_1\times\mathbb{P}^1, \mathbb{P}^1\times\mathbb{P}^1)^{a_5}
\end{equation*}
Conjecture 1.3 has been proven for pairs with no boundary and for smooth pairs by Levine-Pandharipande in \cite{LP}, so the previous equality gives 
\begin{align*}
\text{Z}(X,D)^r &= M(-q)^{a_1\alpha(\mathbb{P}^3,\emptyset) + a_2 \alpha(\mathbb{P}^2\times\mathbb{P}^1,\emptyset) + a_3\alpha(\mathbb{P}^1\times\mathbb{P}^1\times\mathbb{P}^1,\emptyset) + a_4\alpha(\mathbb{P}^3,\mathbb{P}^2) + a_5\alpha(\mathbb{F}_1\times\mathbb{P}^1, \mathbb{P}^1\times\mathbb{P}^1)} \\
&= M(-q)^{r\alpha(X,D)} \\
&= M(-q)^{r c_3(T^{\text{log}}_X\otimes K^{\text{log}}_X)}
\end{align*}
Since $\text{Z}(X,D)(0) = 1$ and $M(0) =1$, we conclude 
\begin{equation*}
    \text{Z}(X,D) = M(-q)^{c_3(T^{\text{log}}_X\otimes K^{\text{log}}_X)}
\end{equation*}

\end{proof}

\section{Studying the logarithmic cobordism ring}

In this section we describe all invariants of the logarithmic cobordism ring, which are morphisms of $\mathbb{Q}$ rings $\omega^{\text{log}}_n\rightarrow \mathbb{Q}$. Recall that the notation $\lambda \vdash n$ means that $\lambda$ is a partition of $n$.

\begin{theo}
Let $(X,D)$ be a simple normal crossings pair with $\dim X = n$. Choose $i,k,\lambda \vdash (n-(i+1)k))$ such that $i$ is odd and $n-(i+1)k\geq0$, then 

\begin{equation*} 
\alpha_{i,\lambda,k}(X,D) \coloneqq \sum_{V_k\subset X, \text{codim $k$ strata}} c_k(N_{V/X})^i c_\lambda(T^{\text{log}}_V)
\end{equation*}
where the log structure of $V$ is inherited from $X$. Then $\alpha_{i,k,\lambda}$ gives us an invariant $\omega^{\text{log}}_n\rightarrow \mathbb{Q}$
\end{theo}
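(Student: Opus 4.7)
The plan is to adapt the argument of Theorem 4.1 to the total space of a double point degeneration $\mathcal{D}\subset\mathcal{X}\xrightarrow{\pi}\mathbb{P}^1$ with smooth fiber $X$ and central fiber $\pi^{-1}(0) = A\cup_E B$. The crucial tool is the relative log tangent bundle $T^{\text{log}}_{\mathcal{V}/\mathbb{P}^1}$ attached to each codim-$k$ stratum $\mathcal{V}$ of $(\mathcal{X},\mathcal{D})$, where $\mathcal{V}$ carries the log structure inherited from $(\mathcal{X},\mathcal{D}\cup A\cup B)$ and $\mathbb{P}^1$ the log structure from $\{0\}$. By log smoothness together with the transversality assumptions on $\mathcal{D}$, this rank $n-k$ bundle has the key restriction property: it gives $T^{\text{log}}_V$ on $V = \mathcal{V}\cap X$ (log structure $\partial X$) and $T^{\text{log}}_{V_A,\,D_A\cup E}$ on $V_A = \mathcal{V}\cap A$, analogously for $B$. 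The normal bundle $N_{\mathcal{V}/\mathcal{X}}$ restricts by transversality to $N_{V/X}$, $N_{V_A/A}$, $N_{V_B/B}$.

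Applying the rational equivalence $[X] = [A] + [B]$ in $\mathrm{CH}(\mathcal{X})$ on each $\mathcal{V}$ then yields
\begin{equation*}
    \int_V c_k(N_{V/X})^i c_\lambda(T^{\text{log}}_V) = \int_{V_A} c_k(N_{V_A/A})^i c_\lambda(T^{\text{log}}_{V_A,\,D_A\cup E}) + \int_{V_B} c_k(N_{V_B/B})^i c_\lambda(T^{\text{log}}_{V_B,\,D_B\cup E}).
\end{equation*}
Summing over codim-$k$ $\mathcal{D}$-strata $\mathcal{V}$, the right-hand side collects exactly the contributions of $D_A$-strata to $\alpha_{i,k,\lambda}(A,D_A\cup E)$ and of $D_B$-strata to $\alpha_{i,k,\lambda}(B,D_B\cup E)$. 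The only terms missed are the \emph{$E$-strata}: for each codim-$(k-1)$ $\mathcal{D}$-stratum $\mathcal{V}'$, the subvariety $W := \mathcal{V}'\cap E$ is a codim-$k$ stratum of both $(A,D_A\cup E)$ and $(B,D_B\cup E)$.

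The final step is to prove that the two $E$-strata contributions cancel. For each such $W$, the inherited log structure, whose boundary consists of the $\mathcal{D}_j\cap W$ for components $\mathcal{D}_j$ not containing $\mathcal{V}'$, agrees whether $W$ is viewed inside $A$ or inside $B$, so $T^{\text{log}}_W$ is common to both sides. The normal bundle fits into
\begin{equation*}
    0 \to N_{E/A}|_W \to N_{W/A} \to N_{\mathcal{V}'/\mathcal{X}}|_W \to 0,
\end{equation*}
so that $c_k(N_{W/A})^i = c_1(N_{E/A}|_W)^i \cdot c_{k-1}(N_{\mathcal{V}'/\mathcal{X}}|_W)^i$, with the analogous formula on $B$ obtained by replacing $N_{E/A}$ by $N_{E/B}$. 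Transversality of $A$ and $B$ along $E$ yields $N_{E/A}\cong N_{E/B}^{\vee}$, hence $c_1(N_{E/A})^i + c_1(N_{E/B})^i = c_1(N_{E/A})^i(1 + (-1)^i)$; the hypothesis that $i$ is odd kills this factor, and the $A$- and $B$-contributions at each $W$ cancel term by term.

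I expect the main obstacle to be rigorously justifying the restriction property of $T^{\text{log}}_{\mathcal{V}/\mathbb{P}^1}$ to the pieces $V_A$ and $V_B$ of the special fiber, in particular that the inherited log structure on $V_A$ produces the divisor $D_A\cup E$ rather than $D_A$ alone. This feature is what routes the $E$-correction into the log tangent bundle on the special fiber, and without it the naive analog of Theorem 4.1 produces an unbalanced expression whose leftover cannot be absorbed by the parity argument on $N_{E/A}$.
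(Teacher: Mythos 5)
Your proposal is correct and follows essentially the same route as the paper: both form the stratum-wise class $\sum_{\mathcal{V}} c_k(N_{\mathcal{V}/\mathcal{X}})^i c_\lambda(T^{\text{log}}_{\mathcal{V}})$ on the total space of the degeneration, intersect with $[X]=[A]+[B]$ in $\text{CH}(\mathcal{X})$, and cancel the two families of strata contained in $E$ against each other using $N_{E/A}\cong N_{E/B}^{\vee}$ together with the odd parity of $i$. The only divergence is one of rigor, in your favor: where the paper restricts the absolute log tangent bundles $T^{\text{log}}_{\mathcal{V}_k}$ and simply asserts both the identification of the restricted terms and the $E$-strata cancellation, you work with the relative bundle $T^{\text{log}}_{\mathcal{V}/\mathbb{P}^1}$ for the log structure including the central fiber --- which is what actually produces the boundary $D_A\cup E$ (rather than $D_A$) on $V_A$, as you correctly flag --- and you substantiate the cancellation with the explicit normal-bundle sequence and multiplicativity of top Chern classes.
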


\begin{proof}
All we need to show is that $\alpha_{i,k,\lambda}$ respects double point degenerations. Thus, suppose $\mathcal{D}\subset \mathcal{X}\rightarrow \mathbb{P}^1$ is a degeneration of snc pairs as in our definition, let $(X,D)$ be a smooth fiber, where $D = \mathcal{D}\cap X$ and let the singular fiber be $A\cup_E B$, so the log structures on the components of the singular fiber are $(A,E\cup D_A)$ and $(B,E\cup D_B)$, where $D_A = A\cap\mathcal{D}$ and $D_B = B\cap \mathcal{D}$. Consider $\mathcal{X}$ with boundary given by $\mathcal{D}$, and consider the following chow class in $X$
\begin{equation*} 
\sum_{\mathcal{V}_k \subset \mathcal{X},\text{codim $k$ strata}} c^{k}(N_{\mathcal{V}_k\subset \mathcal{X}})^ic_{\lambda}(T^{log}_{\mathcal{V}_k}) \in \text{CH}(\mathcal{X})
\end{equation*}
Since $\mathcal{X}\rightarrow \mathbb{P}^1$ is flat, we know $[X] = [A\cup_E B] = [A] + [B]$ in $\text{CH}(\mathcal{X})$, and so we conclude that intersecting the above class with $[X] = [A] +[B]$ gives us the equality 
\begin{align*}
\sum_{V_k\subset X} c_k(N_{V/X})^i c_\lambda(T^{\text{log}}_V) =  \sum_{V^A_k\subset A, V^A_k\not \subset E } c^i_k(N_{V^A\subset A})c_\lambda(T^\text{log}_{V^A_k}) + \sum_{V^B_k\subset B,V^B_k\not\subset E} c^i_k(N_{V^B\subset B})c_\lambda(T^{log}_{V^B_k})
\end{align*}
where all sums are over strata of codimension $k$. The left hand side of the equation is equal to $\alpha_{i,\lambda,k}(X,D)$, and right hand side of the above equation is indeed equal to $\alpha_{i,\lambda,k}(A,D_A\cup E)+\alpha_{i,\lambda,k}(B,D_B\cup E))$. The reason is that the terms coming from strata being contained in $E$ cancel out due to the fact that $N_{E/A} = N^\vee_{E/B}$. So we get that $\alpha_{i,\lambda,k}(X,D) = \alpha_{i,\lambda,k}(A,D_A\cup E) + \alpha_{i,\lambda,k}(B,D_B \cup E)$.

\end{proof}

We will prove that the above invariants $\alpha_{i,\lambda,k}$ are all of the invariants of the logarithmic cobordism ring $\omega^{\text{log}}_n$. To understand the invariants of $\omega^{\text{log}}_n$ it suffices to understand them on generators of the ring, and here we describe a useful set of generators. Recall that there are maps $\eta_k:\omega_{k,1^{n-k}}\rightarrow \omega^{\text{log}}_n$ which send a variety with line bundles $(Z,L_1,\dots,L_k)$ to an snc pair by iteratively taking $\mathbb{P}^1$ bundles. Then this list of morphisms gives us additive generators for the log cobordism ring
\begin{equation*}
\omega^{\text{log}}_n = \sum_{k=1,n-1} \eta_k(\omega_{k,1^{n-k}})
\end{equation*}
This is not hard to seen and we have implicitly proved this result already via our process of trading divisor components for line bundles. For clarity we spell this out here. Suppose that $(X,D)$ is an snc pair with $\dim X = n$. Write $D = D_1 \cup \dots \cup D_k$. Then by degenerating to the normal cone of $D_1$ we get 
\begin{equation*}
(X,(D_2\cup \dots \cup D_{k-1})) = (X,D_1\cup \dots \cup D_k) + (D_1,Z,N_{D_1/X})
\end{equation*}
By induction on the number of irreducible components $k$, we can assume that the left hand side in the above equation can be written as a sum of terms of the form $\eta_i(W,L_1,...,L_i)$. Can continue to iterate and eventually conclude that $(D,Z,L)$ can also be written as a sum of varieities with no boundary and only line bundles, and the reason for this is that we can keep trading boundary components as in the above step until we have pushed off all of the boundary components and we only have line bundles.
\\
\\
With this in hand, we can now prove that the invariants we described above are all of the invariants. 
\begin{theo}
Consider an invariant $\phi:\omega^{\text{log}}_n\rightarrow \mathbb{Q}$. Then $\phi$ is in the span of the $\alpha_{i,\lambda,k}$
\end{theo}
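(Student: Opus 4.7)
The plan is to exploit the generation of $\Omega^{\text{log}}_n$ by the images of the morphisms $\eta_j: \Omega_{j,1^{n-j}} \to \Omega^{\text{log}}_n$ (denoted $\eta_k$ in the excerpt; I rename the index to avoid collision with the codimension $k$ appearing in $\alpha_{i,\lambda,k}$) together with the Lee--Pandharipande classification of invariants on the cobordism rings of varieties with line bundles. Since $\sum_j \eta_j(\Omega_{j,1^{n-j}}) = \Omega^{\text{log}}_n \otimes \mathbb{Q}$, any invariant $\phi$ is determined by the family of pullbacks $\phi_j := \phi\circ \eta_j$, and by Lee--Pandharipande each $\phi_j$ is a $\mathbb{Q}$-linear combination of Chern-number functionals of the form
\begin{equation*}
(Z, L_1, \ldots, L_{n-j}) \mapsto \int_Z P(c(T_Z), c_1(L_1), \ldots, c_1(L_{n-j}))
\end{equation*}
for monomials $P$. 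The task then reduces to realising an arbitrary compatible family $(\phi_j)_j$ of such functionals as a rational combination of the $\alpha_{i,\lambda,k}$.

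The next step is an explicit calculation of the pullback $\eta_j^*\alpha_{i,\lambda,k}$. The pair $\eta_j(Z,L_1,\ldots,L_{n-j})$ is an iterated $\mathbb{P}^1$-bundle $\pi: Y\to Z$ whose snc boundary has $n-j$ irreducible components $D_1,\ldots,D_{n-j}$; the codimension-$k$ strata are the intersections $D_S = \bigcap_{s\in S}D_s$ for $|S|=k$, vanishing identically when $k > n-j$, and each is itself an iterated $\mathbb{P}^1$-bundle over $Z$. The normal bundle $N_{D_S/Y}$ splits as a direct sum of (twisted) pullbacks of the $L_s$ for $s\in S$, while $T^{\log}_{D_S}$ fits into a sequence involving pullbacks of $T_Z$ and the relative tangent bundles of the remaining $\mathbb{P}^1$-factors. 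Pushing down along $\pi$, one expresses $\eta_j^*\alpha_{i,\lambda,k}$ as an explicit Chern-number polynomial on $(Z, L_1, \ldots, L_{n-j})$.

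The main obstacle I anticipate is the subsequent linear-algebra step: to show that as $(i, \lambda, k)$ varies, the functionals $\eta_j^*\alpha_{i,\lambda,k}$ together span the Lee--Pandharipande invariants on each $\Omega_{j,1^{n-j}}$, modulo the compatibilities imposed by $\ker\bigl(\bigoplus_j \eta_j\bigr)$. I would proceed by descending induction on $j$, exploiting the triangularity that $\alpha_{i,\lambda,k}$ vanishes on $\eta_{j'}(\Omega_{j',1^{n-j'}})$ whenever $k > n-j'$. The base case $j = n$ is immediate: $\alpha_{1,\lambda,0}(Z,\emptyset) = c_\lambda(T_Z)$ recovers the standard Chern-number generators of $\Omega_n \otimes \mathbb{Q}$. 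In the inductive step, varying the odd exponent $i$ should produce Vandermonde-type combinations in the Chern roots of the normal bundle of the deepest stratum, while varying $\lambda$ and $k$ adds flexibility in the line-bundle monomials and the tangent-bundle contributions; the required compatibility with $\ker\bigl(\bigoplus_j \eta_j\bigr)$ is automatic because the $\alpha$'s are genuine invariants of $\Omega^{\text{log}}_n$. The delicate part is verifying that these combinations actually cover all Lee--Pandharipande Chern-number monomials, which I expect will require a careful symmetric-function argument managing the restriction that $i$ is odd.
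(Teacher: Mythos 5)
Your skeleton coincides with the paper's up to a point: both arguments start from the additive generation $\Omega^{\text{log}}_n=\sum_j \eta_j(\Omega_{j,1^{n-j}})$ and invoke Lee--Pandharipande to say that each restriction $\phi_j=\phi\circ\eta_j$ is a combination of Chern-number monomials in $c(T_Z)$ and the $c_1(L_s)$. But the decisive step is missing from your plan: you never determine \emph{which} families $(\phi_j)_j$ actually arise from an invariant of $\Omega^{\text{log}}_n$, i.e.\ you have no upper bound on the space of invariants. The paper obtains this upper bound not by computing the pullbacks $\eta_j^*\alpha_{i,\lambda,k}$ and doing linear algebra, but by extracting explicit constraints on an \emph{arbitrary} $\phi$ from concrete double point relations: after subtracting the boundary-free part of $\phi$, the deformation to the normal cone of $\mathbb{P}(L)\subset\mathbb{P}(L\oplus\mathcal{O})$ gives
\begin{equation*}
(\mathbb{P}(L\oplus\mathcal{O}),\emptyset)=(\mathbb{P}(L\oplus\mathcal{O}),\mathbb{P}(L))+(\mathbb{P}(L^\vee\oplus\mathcal{O}),\mathbb{P}(L^\vee)),
\end{equation*}
hence $\phi(Z,L)=-\phi(Z,L^\vee)$, and analogous normal-cone degenerations give $\phi(Z,L,M)=\phi(Z,M,L)$ and antisymmetry in each bundle separately. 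These relations force every $\phi_j$ into the small subspace spanned by $c_\lambda(Z)c_1(L_1)^i\cdots c_1(L_s)^i$ with $i$ odd, and matching that subspace against the restrictions of the $\alpha_{i,\lambda,k}$ is then essentially immediate --- no Vandermonde or symmetric-function analysis is needed.

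The gap is visible in your closing sentence, where the goal drifts into ``verifying that these combinations actually cover all Lee--Pandharipande Chern-number monomials'': this is false and provably unattainable. Since $N_{V/X}$ restricts along $\eta_j$ to a sum of the (twisted pullbacks of the) $L_s$ and $i$ is odd, every $\eta_j^*\alpha_{i,\lambda,k}$ changes sign under $L\mapsto L^\vee$, so an even monomial such as $c_\lambda(Z)c_1(L)^2$ on $(Z,L)$ lies outside their span for structural reasons. The correct target is only the subspace of functionals that descend to $\Omega^{\text{log}}_n$, and identifying that subspace is exactly the constraint-derivation above, which your proposal does not supply: your remark that compatibility ``is automatic because the $\alpha$'s are genuine invariants'' establishes only the easy inclusion (the span of the $\alpha$'s consists of invariants, which is the content of the preceding theorem), whereas the theorem at hand is the reverse inclusion. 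A secondary caution: the sum $\sum_j\eta_j(\Omega_{j,1^{n-j}})$ is not direct, so ``an arbitrary compatible family'' is not a well-posed input without computing $\ker\bigl(\bigoplus_j\eta_j\bigr)$ --- a computation the paper deliberately avoids by constraining $\phi$ piece by piece through degenerations. Your triangularity observation (vanishing of $\alpha_{i,\lambda,k}$ on $\eta_{j'}$-images when $k>n-j'$) and the base case $\alpha_{i,\lambda,0}(Z,\emptyset)=c_\lambda(T_Z)$ are both correct and could organize a rewrite, but as it stands the plan would stall at precisely the step the paper's proof is built to handle.
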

\begin{proof}
First, we understand what $\phi$ restricted to pairs $(X,\emptyset)$ with no boundary is: it must be a linear combination of the chern invariants $c_\lambda$, so we may assume that $\phi$ is zero on pairs with no boundary by substracting off this restriction. Next, let us see what $\phi$ does on pairs $(X,D)$ where $D$ is a smooth divisor, or equivalently we need to understand what $\phi$ does on pairs $(D,L)$, $L\in\text{Pic}(L)$, where this corresponds to the pair $(\mathbb{P}(L\oplus \mathcal{O}),\mathbb{P}(L))$ with $D = \mathbb{P}(L)$. By degenerating along the normal cone for $\mathbb{P}(L)\subset \mathbb{P}(L\oplus \mathcal{O})$ we get the following relation:
\begin{equation*}
(\mathbb{P}(L\oplus O),\emptyset) = (\mathbb{P}(L\oplus \mathcal{O}),\mathbb{P}(L)) + (\mathbb{P}(L^\vee\oplus \mathcal{O}),\mathbb{P}(L^\vee))
\end{equation*}
since the normal bundle of $D = \mathbb{P}(L)$ inside of $\mathbb{P}(L\oplus \mathcal{O})$ is $L^\vee$. Applying $\phi$ to both sides and using that $\phi$ is zero on pairs with no boundary, we get
\begin{equation*}
    \phi(D,L) = -\phi(D,L^\vee)
\end{equation*}
The invariants of $(D,L)$, by the work of Lee-Pandharipande are of the form $c_\lambda(D)c_1(L)^i$, and the above equality tells us that $i$ must be odd. This allows us to completely understand what $\phi$ does on pairs with smooth boundary, i.e. on the elements of our generating set previously described of the form $(D,L)$. Now consider elements of the form $(Z,L,M)$. Then we claim that on these elements of the generating set we must get that $\phi$ satisfies 
\begin{align*}
    &\phi(Z,L,M) = \phi(Z,M,L),\\
    &\phi(Z,L^{-1},M) = -\phi(Z,L,M)\\
    &\phi(Z,L,M^{-1}) = -\phi(Z,L,M)
\end{align*}
Each of these can be seen via appropriate degenerations to the normal cone. The only invariants of $(Z,L,M)$ which satisfy this condition are those of the form 
\begin{equation*}
    c_\lambda(Z)c_1(L)^ic_1(M)^i
\end{equation*}
Continuing on in this way, we are able to determine $\phi$ when restricted to each piece of the generating set
\begin{equation*}
\omega^{\text{log}}_n = \sum^{n-1}_{k=1} \eta_k(\omega_{k,1^{n-k}})
\end{equation*}
In general, $\phi$ restricted to elements of the form $(Z,L_1,\dots L_s)$ must correspond to elements of the form $c_\lambda(Z)c_1(L_1)^{i}\dots c_1(L_s)^i$ where $i$ is odd. We then conclude that $\phi$ is in the span of our invariants, because each of the invariants $\alpha_{i,\lambda,k}$ will restrict to a sum of the invariants $c_\lambda(Z)c_1(L_1)^{i}\dots c_1(L_s)^i$ on elements of the form $(Z,L_1,\dots,L_s)$, and we have just shown that these determine $\phi$.
\end{proof}

\section{Logarithmic cobordism with log modifications}

Let $(X,D)$ be a simple normal crossings pair, and recall that a logarithmic modification $(X',D')$ of $(X,D)$ is a blow up of $(X,D)$ along strata, with $D' = \widetilde{D}\cup E$ being the strict transform of $D$ along with the exceptional divisor. Let $\omega^{\text{Log+Mod}}$ be the quotient of $\omega^{\text{Log}}$ where we further impose the relation $(X',D') = (X,D)$ for any log modification $(X',D')\rightarrow (X,D)$. For example, suppose we have $(\mathbb{P}^2,L_1\cup L_2)$ where $L_1,L_2$ are two lines in $\mathbb{P}^2$ meeting transversally, then we have the equality $(\text{Bl}_p(\mathbb{P}^2),\widetilde{L}_1\cup \widetilde{L}_2\cup E) = (\mathbb{P}^2,L_1\cup L_2)$ in $\omega^{\text{Log+Mod}}$. 
We now have the following diagram

\begin{center}
\begin{tikzcd}
\omega^{LP} \arrow[d] \arrow[rd] &                    \\
\omega^{Log} \arrow[r]           & \omega^{Log + Mod}
\end{tikzcd}
\end{center}

and the goal of this section is to prove that the map $\omega^{LP}\rightarrow \omega^{\text{Log+Mod}}$ is an isomorphism. Before we begin proving this, we start off with some observations that we will use.

Let $(X,D)$ be an snc pair and let $Z = D_{i_1}\cap \dots \cap D_{i_k}$ be a stratum of $X$. Consider the total space $\mathcal{X} = \text{Bl}_{Z\times 0}(X\times \mathbb{P}^1)\rightarrow \mathbb{P}^1$ which gives us a degeneration $X\rightsquigarrow \text{Bl}_Z(X)\cup \mathbb{P}(N_Z\oplus \mathcal{O})$. Taking the strict transforms $\widetilde{\mathcal{D}} \subset \mathcal{X}$ as our family of divisors, where $\mathcal{D} = D\times\mathbb{P}^1$, then the deformation to the normal cone for $Z\subset X$ gives us the relation
\begin{equation*}
    (X,D) = (\text{Bl}_Z(X), D') + (\mathbb{P}(N_Z\oplus \mathcal{O}),\eta^{-1}(\partial Z)\cup \mathbb{P}(N_Z)\cup \mathbb{P}_0\cup \dots \cup \mathbb{P}_k)
\end{equation*}
where by $\mathbb{P}_i$ we mean the following. First, the normal bundle $N_Z$ splits as $L_1\oplus\dots\oplus L_k$ since $Z$ is the transverse intersection of boundary divisors, and $\mathbb{P}_s = \mathbb{P}(N_Z/L_{i_s}\oplus \mathcal{O})$ for $s\geq1$ and $\mathbb{P}_0 = \mathbb{P}(N_Z) = \mathbb{P}(L_1\oplus \dots L_k)$. More geometrically, the term snc pair $(\mathbb{P}(N_Z\oplus \mathcal{O}),\eta^{-1}(\partial Z)\cup \mathbb{P}(N_Z)\cup \mathbb{P}_1\cup \dots \cup \mathbb{P}_k)$ has a map to the base $(Z,\partial Z)$, and the given boundary on $\mathbb{P}(N_Z\oplus \mathcal{O})$ restricted to a fiber $F\cong \mathbb{P}^k$ endows $\mathbb{P}^k$ with its full toric boundary consisting of the $k+1$ hyperplanes $H_0\cup \dots \cup H_k$. 

What the above observation allows us to conclude is that a log modification $(\text{Bl}_Z(X),D')$ of $(X,D)$ is equal to $(X,D)$ up to a 'correction' term, which consists of a projective space bundle $\mathbb{P}\rightarrow Z$. So in $\omega^{\text{Log+Mod}}$ we have:
\begin{equation*}
(\mathbb{P}(N_Z\oplus \mathcal{O}),\eta^{-1}(\partial Z)\cup \mathbb{P}(N_Z)\cup \mathbb{P}_0\cup \dots \cup \mathbb{P}_k) = 0 \in \omega^{\text{Log+Mod}}
\end{equation*}
where $Z\subset X$ is a stratum in some snc pair $(X,D)$.
In particular, consider the case in which our stratum $Z$ is a point, i.e. $Z$ is the intersection of $n = \dim X$ boundary divisors, then the above observation tells us
\begin{equation*}
    (\mathbb{P}^n,H_0\cup\dots\cup H_n) = 0\in\omega^{\text{Log+Mod}}
\end{equation*}
Somewhat surprisingly, the main ingredient in showing $\omega^{\text{LP}}\rightarrow \omega^{\text{Log+Mod}}$ that is an isomorphism is the relation $(\mathbb{P}^n,H_0\cup\dots\cup H_n) = 0$. 

Before we embark on our proof, we present one more simple observation. Suppose that $(X,L)$ and $(Y,M)$ are smooth projective varieties with two line bundles $L\in \text{Pic}(X)$, $M\in\text{Pic}(Y)$. Recall that we have a process that associates snc pairs to any variety with line bundles: 
\begin{equation*}
    (X,L)\mapsto (\mathbb{P}(L\oplus \mathcal{O}),\mathbb{P}(L) = X),\quad (Y,M)\mapsto (\mathbb{P}(Y\oplus \mathcal{O}),\mathbb{P}(M) = Y),\quad 
\end{equation*}
Recall that we also have a product of snc pairs in our logarithmic cobordism ring:
\begin{equation*}
    (Z,\partial Z)\times (W,\partial W) = (Z\times W,\partial Z\times W\cup Z\times \partial W)
\end{equation*}
Then the following is true:
\begin{prop}
The product of snc pairs $(\mathbb{P}(L\oplus O),\mathbb{P}(L))\cdot (\mathbb{P}(M\oplus \mathcal{O}),\mathbb{P}(M))$ is the same as the snc pair associated to $(X\times Y, \text{pr}^\ast_XL,\text{pr}^\ast_Y M)$
\end{prop}
\begin{proof}
This follows from the elementary observation that the snc pair associated to $(X\times Y,\text{pr}^\ast_XL)$ is $(\mathbb{P}(L\oplus \mathcal{O})\times Y, \mathbb{P}(L)\times Y)$; so the snc pair associated to $(\mathbb{P}(L\oplus \mathcal{O})\times Y, \mathbb{P}(L)\times Y,\text{pr}^\ast_YM)$ is $(\mathbb{P}(L\oplus \mathcal{O})\times \mathbb{P}(M\oplus \mathcal{O}),\mathbb{P}(L)\times  \mathbb{P}(M\oplus\mathcal{O})\cup \mathbb{P}(L\oplus \mathcal{O})\times \mathbb{P}(M)) $
\end{proof}
We are now ready to begin the proof of our main theorem. 

\begin{theo}
Consider the diagram 
\begin{center}
\begin{tikzcd}
\omega^{LP} \arrow[d] \arrow[rd] &                    \\
\omega^{Log} \arrow[r]           & \omega^{Log + Mod}
\end{tikzcd}
\end{center}
where the map $\omega^{LP}$ is the morphism from the Levine-Pandharipande cobordism ring to the logarithmic cobordism ring, and the map $\omega^{Log}\rightarrow \omega^{Log +Mod}$ is the quotient map. Then the composition $\omega^{LP}\rightarrow \omega^{Log+Mod}$ is an isomorphism.
\end{theo}

\begin{proof}
First, note that $\omega^{\text{LP}}\rightarrow \omega^{\text{Log+Mod}}$ is an injection since the log chern numbers $c_\lambda(T^{\text{log}}_{(X,D)})$ are invariants of $\omega^{\text{Log+Mod}}$, and these restrict to the chern numbers $c_\lambda(T_X)$ on pairs with no boundary, which are precisely the invariants of $\omega^{\text{LP}}$. So it suffices to prove that the natural map $\omega^{\text{LP}}\rightarrow \omega^{\text{Log+Mod}}$ is a surjection. 

We proceed by induction on the dimension of the variety $\dim X = n$ in $(X,D)$. The case $\omega^{\text{LP}}_{1}\rightarrow \omega^{\text{Log+Mod}}_1$ follows from the fact $(\mathbb{P}^1,\emptyset) = 2(\mathbb{P}^1,\text{pt})$, and this can be seen by performing deformation to the normal cone for $\text{pt}\in\mathbb{P}^1$.

Next, suppose we've proven that for any $m<n$ the map  $\omega^{\text{LP}}_m\rightarrow \omega^{\text{Log+Mod}}_m$ is a surjection. Using our discussion preceeding Theorem 5.2, we know that $\omega^{\text{Log}}_n$ and therefore $\omega^{\text{Log+Mod}}_n$ is generated by the snc pairs associated to varieties with line bundles 
\begin{equation*}
    (Z,L_1,\dots,L_k)
\end{equation*}
where $\dim Z +k = n$. By \cite{LeeP}, the algebraic cobordism ring $\omega^{\text{LP}}_{k,1^k}$ of varieties with line bundles $(Z,L_1,\dots,L_k)$ is generated by elements obtained via taking a partition $\lambda\vdash \dim Z$ and taking a sub-partition $\mu$ of $\lambda$, so that the correspoinding basis element $\omega^{\text{LP}}_{k,1^k}$ is of the form 
\begin{equation*}
    (\mathbb{P}^{\lambda_1}\times \dots \mathbb{P}^{\lambda_{l(\lambda)}}, M_1,...,M_k)
\end{equation*}
such that: each $M_i$ is pulled back from one of the factors in the product $\mathbb{P}^{\lambda_1}\times \dots \mathbb{P}^{\lambda_{l(\lambda)}}$, each $M_i$ is pullbed back from a distinct factor, and each $M_i$ is either $\mathcal{O}(1)$ or $\mathcal{O}$. The snc pair corresponding to a basis element of $\omega^{\text{LP}}_{k,1^k}$ will, by applying Proposition 6.1, be a mix of product of elements of the form $(\mathbb{P}^l,\emptyset),(\mathbb{P}^l,\mathcal{O}(1)),(\mathbb{P}^l,\mathcal{O})$. We now use that deforming to the normal cone for a hyperplane $\mathbb{P}^{l}\subset \mathbb{P}^{l+1}$ gives us the relation
\begin{equation*}
    (\mathbb{P}^{l+1},\emptyset) = (\mathbb{P}^{l+1},\mathbb{P}^{l}) + (\mathbb{P}^{l},\mathcal{O}(1))
\end{equation*}
Assuming that $l<n-1$, our inductive hypothesis tells us that $ (\mathbb{P}^{l+1},\mathbb{P}^{l})$ can be written as a sum of terms with no boundary, and therefore the same is true for $(\mathbb{P}^{l},\mathcal{O}(1))$. 

The conclusion of the above argument is that to show $\omega^{\text{LP}}_n\rightarrow \omega^{\text{Log+Mod}}_n$ is surjective, it suffices to prove that $(\mathbb{P}^{n-1},\mathcal{O}(1))$ can be written as a sum of terms with no boundary. Equivalently, it suffices to show that $(\mathbb{P}^n,\mathbb{P}^{n-1}) = (\mathbb{P}^n,\emptyset) - (\mathbb{P}^{n-1},\mathcal{O}(1))$ can be written as a sum of terms with no boundary. This is where we use the relation $(\mathbb{P}^n, H_0\cup \dots \cup H_n)=0$. Performing deformation to the normal cone for $H_1\subset \mathbb{P}^n$ gives us the following relation:
\begin{equation*}
    (\mathbb{P}^n,H_0) = (\mathbb{P}^n,H_0\cup H_1) + (H_1,H'_0,\mathcal{O}(1))
\end{equation*}
where $H'_0 = H_1\cap H_0 = H_1$ is the induced boundary on $H_1$ and the snc pair associated to $(H_1,H'_0,\mathcal{O}(1))$ is given by taking $\mathbb{P}(\mathcal{O}_{H_1}(1)\oplus\mathcal{O})$ and letting the boundary be $\mathbb{P}(\mathcal{O}(1))\cup \eta^{-1}(H'_0)$ where $\eta: \mathbb{P}(\mathcal{O}_{H_1}(1)\oplus\mathcal{O}) \rightarrow H_1$ is the bundle projection. 

Now, if we perform deformation to the normal cone for $H'_0\subset H_1,$ we get that 
\begin{equation*}
    (H_1,\emptyset,\mathcal{O}(1)) = (H_1,H'_0,\mathcal{O}(1)) + (H'_0,\mathcal{O}(1),\mathcal{O}(1))
\end{equation*}
Recall also the identity 
\begin{equation*}
    (\mathbb{P}^n,\emptyset) = (\mathbb{P}^n,\mathbb{P}^{n-1}) + (\mathbb{P}^{n-1},\mathcal{O}(1))
\end{equation*}
Applying this to $(H_1,\emptyset,\mathcal{O}(1))$, we get that 
\begin{equation*}
    (H_1,H'_0,\mathcal{O}(1)) = (\mathbb{P}^n,\emptyset) - (\mathbb{P}^n,\mathbb{P}^{n-1}=H_0) - (H'_0,\mathcal{O}(1),\mathcal{O}(1))
\end{equation*}
Using the above expression we can now write:
\begin{align*}
    (\mathbb{P}^n,H_0) &= (\mathbb{P}^n,H_0\cup H_1) + (H_1,H'_0,\mathcal{O}(1))\\
    &= (\mathbb{P}^n,H_0\cup H_1) + (\mathbb{P}^n,\emptyset) - (\mathbb{P}^n,H_0) - (H'_0,\mathcal{O}(1),\mathcal{O}(1))
\end{align*}
Which can be re-written as 
\begin{equation*}
     (\mathbb{P}^n,H_0) = \frac{1}{2}((\mathbb{P}^n,H_0\cup H_1) + (\mathbb{P}^n,\emptyset) - (H'_0,\mathcal{O}(1),\mathcal{O}(1)))
\end{equation*}
Since $\dim H'_0 = n-2<n-1$, our inductive hypothesis tells us that $(H'_0,\mathcal{O}(1),\mathcal{O}(1))$ can be written as a sum of terms with no boundary. Therefore, it suffices to show that $(\mathbb{P}^n,H_0\cup H_1)$ can be written as a sum of terms with no boundary. Performing deformation to the normal cone for $H_2\subset \mathbb{P}^n$, we get the equality
\begin{equation*}
    (\mathbb{P}^n,H_0\cup H_1) = (\mathbb{P}^n,H_1\cup H_1\cup H_2) + (H_2,H'_0\cup H'_1,\mathcal{O}(1))
\end{equation*}
The term $(H_2,H'_0\cup H'_1,\mathcal{O}(1))$ can be handled just as how the term $(H_1,H'_0,\mathcal{O}(1))$ was handled. Continuing on in this fashion, this process eventually terminates since $(\mathbb{P}^n,H_0\cup\dots \cup H_n) = 0$ in $\omega^{\text{Log+Mod}}$, and each of the terms that we pick up are of the form 
\begin{equation*}
    (H_k,H'_0\cup\dots\cup H'_{k-1},\mathcal{O}(1))
\end{equation*}
and each such term is handled by using our inductive hypothesis and using our analysis of the term $(H_1,H'_0,\mathcal{O}(1))$. The conclusion is that we have an expression of the form 
\begin{equation*}
    (\mathbb{P}^n,H_0) = \text{rational number}(\sum\text{terms with no boundary})
\end{equation*}
and hence we are done.

\end{proof}

The main point of the above proof is that showing surjectivity of $\omega^{LP}_n\rightarrow \omega^{Log+Mod}_n$ is equivalent to showing that snc pair $(\mathbb{P}^n,\mathbb{P}^{n-1})$ lies in the image, i.e. can be written as a sum of terms with no boundary. Our proof then gives an algorithm for writing $(\mathbb{P}^n,\mathbb{P}^{n-1})$ as a sum of terms with no boundary. 
\begin{eg}
We go explicitly through our algorithm of writting $(\mathbb{P}^n,\mathbb{P}^{n-1})$ as a sum of terms with no boundary in the case when $n=2$. Let $L_0,L_1,L_2$ be the toric boundary of $\mathbb{P}^2$, then by performing deformation to the normal cone for $L_1$ we get the relation 
\begin{equation*}
    (\mathbb{P}^2,L_0) = (\mathbb{P}^2,L_0\cup L_1) + (L_1,p,\mathcal{O}(1)) =  (\mathbb{P}^2,L_0\cup L_1) + (\mathbb{P}^1,p,\mathcal{O}(1)) 
\end{equation*}
where $p = L_0\cap L_1$. By performing deformation to the normal cone for $p\in L_1 = \mathbb{P}^1$, we get the relation
\begin{equation*}
    (\mathbb{P}^1,\emptyset,\mathcal{O}(1)) = (\mathbb{P}^1,p,\mathcal{O}(1)) + (p,\emptyset,\mathcal{O}(1)|_p,\mathcal{O}(1)|_p)
\end{equation*}
The term $(p,\emptyset,\mathcal{O}(1)|_p,\mathcal{O}(1)|_p)$ corresponds to the snc surface pair given by 
\begin{equation*}
(\mathbb{P}^1,0)\cdot (\mathbb{P}^1,0) = \frac{1}{4}(\mathbb{P}^1,\emptyset)^2 = \frac{1}{4}(\mathbb{P}^1\times\mathbb{P}^1,\emptyset)
\end{equation*}
where here we used the relation $(\mathbb{P}^1,p) = \frac{1}{2}(\mathbb{P}^1,\emptyset)$. So we get that 
\begin{align*}
(\mathbb{P}^1,p,\mathcal{O}(1)) &= (\mathbb{P}^1,\mathcal{O}(1)) - \frac{1}{4}(\mathbb{P}^1\times\mathbb{P}^1,\emptyset) \\
&= (\mathbb{P}^2,\emptyset)-(\mathbb{P}^2,\mathbb{P}^1) - \frac{1}{4}(\mathbb{P}^1\times\mathbb{P}^1,\emptyset)
\end{align*}
Plugging this into the expression 
\begin{equation*}
    (\mathbb{P}^2,L_0) = (\mathbb{P}^2,L_0\cup L_1) + (L_1,p,\mathcal{O}(1)) =  (\mathbb{P}^2,L_0\cup L_1) + (\mathbb{P}^1,p,\mathcal{O}(1)) 
\end{equation*}
we get that 
\begin{equation*}
    2(\mathbb{P}^2,\mathbb{P}^1) = (\mathbb{P}^2,\emptyset) - \frac{1}{4}(\mathbb{P}^1\times\mathbb{P}^1,\emptyset) + (\mathbb{P}^2,L_0\cup L_1)
\end{equation*}
Performing deformation to the normal cone for $L_2$ and using $(\mathbb{P}^2,L_0\cup L_1\cup L_2) = 0$ we get 
\begin{equation*}
    (\mathbb{P}^2,L_0\cup L_1) = 0 + (\mathbb{P}^1,p+q,\mathcal{O}(1))
\end{equation*}
where $L_2 = \mathbb{P}^1$ and $p+q = (L_0\cup L_1)\cap L_2$. We can write 
\begin{align*}
(\mathbb{P}^1,p+q,\mathcal{O}(1)) &= (\mathbb{P}^1,p,\mathcal{O}(1)) - (q,\emptyset,\mathcal{O}(1)|_q,\mathcal{O}(1)|_q) \\
&=(\mathbb{P}^1,p,\mathcal{O}(1)) - \frac{1}{4}(\mathbb{P}^1\times\mathbb{P}^1,\emptyset)
\end{align*}
So we can now write 
\begin{align*}
    2(\mathbb{P}^2,\mathbb{P}^1) = (\mathbb{P}^2,\emptyset) -\frac{1}{4}(\mathbb{P}^1\times\mathbb{P}^1,\emptyset) +(\mathbb{P}^1,p+q,\mathcal{O}(1))
\end{align*}
And plugging in our expression for $(\mathbb{P}^1,p+q,\mathcal{O}(1))$ we end up at 
\begin{equation*}
    3(\mathbb{P}^2,\mathbb{P}^1) = 2(\mathbb{P}^2,\emptyset) - \frac{3}{4}(\mathbb{P}^1\times\mathbb{P}^1,\emptyset)\Rightarrow (\mathbb{P}^2,\mathbb{P}^1) = \frac{2}{3}(\mathbb{P}^2,\emptyset) - \frac{1}{4}(\mathbb{P}^1\times\mathbb{P}^1,\emptyset)
\end{equation*}
We can check that our expression for $(\mathbb{P}^2,\mathbb{P}^1)$ as a sum of terms with no boundary makes sense by using that the map
\begin{equation*}
\omega^{\text{Log+Mod}}_2\rightarrow \mathbb{Q}\times \mathbb{Q},\quad (S,D)\mapsto (c_1(T^{\text{Log}})^2,c_2(T^{\text{log}}))
\end{equation*}
is an isomorphism and checking that our expression above agrees on the level of chern invariants:
\begin{equation*}
    (4,1) = \frac{2}{3}(9,3) - \frac{1}{4}(8,4)
\end{equation*}

\end{eg}

\section{The log cobordism/modification ring and DT invariants}
The fact that the map $\omega^{\text{LP}}_3 \rightarrow \omega^{\text{Log+Mod}}_3$ is an isomorphism tells us that $\omega^{\text{Log+Mod}}_3$ is generated by elements with no boundary. This allows us to give another simpler proof of the fact that the generating function for zero-dimensional logarithmic DT invariants is given by the simple identity:
\begin{equation*}
    Z(X,D) = M(q)^{\int_X c_3(T^\text{log}_X\otimes K^\text{log}_X)}
\end{equation*}
Similar to the logarithmic birational invariance of Gromov-Witten invariants \cite{AW18}, if $(X',D')\rightarrow (X,D)$ is a logarithmic modification, then $\text{Z}(X',D') = \text{Z}(X,D)$. Thus the invariant $\text{Z}(X,D)$ factors through the quotient $\omega^{\text{log}}_3\rightarrow \omega^{\text{Log+Mod}}_3$, which has a much simpler structure.
\begin{theo}
Let $X$ be a smooth projective threefold and $D\subset X$ an snc divisor. Consider the generating function
\begin{align*}
    \text{Z}(X,D) = \sum_n \deg [\text{Hilb}^n(X,D)]^{\text{vir}}q^n
\end{align*}
Then we have 
\begin{equation*}
    Z(X,D) = M(q)^{\int_X c_3(T^\text{log}_X\otimes K^\text{log}_X)}
\end{equation*}
\end{theo}

\begin{proof}
The invariant $Z(X,D)$ gives us a homomorphism $\omega^{\text{Log+Mod}}_3\rightarrow \mathbb{Q}[[q]]$. The formula is known to hold for threefolds with no boundary $(X,\emptyset)$. However, $\omega^{\text{Log+Mod}}_3$ is generated by threefolds with no boundary, hence the formula is true for all elements.
\end{proof}

\begin{rem}
It is interesting to note that we did not invoke that the formula is known for pairs $(X,D)$ where $D$ is smooth (has one irreducible component). Thus, if one wanted to prove to the formula for the case when $D$ is smooth, it is (counter-intuitively) simpler to just pass to the full snc boundary case and then use all of the constraints imposed from log geometry (degeneration and modification invariance). 
\end{rem}

\end{document}